
\documentclass[12pt]{amsart}
\usepackage{amsfonts,latexsym,amsthm,amssymb,graphicx}
\usepackage[all]{xy}
\usepackage{hyperref}
\DeclareFontFamily{OT1}{rsfs}{}
\DeclareFontShape{OT1}{rsfs}{n}{it}{<-> rsfs10}{}
\DeclareMathAlphabet{\mathscr}{OT1}{rsfs}{n}{it}

\setlength{\textwidth}{6 in}
\setlength{\textheight}{8.75 in}
\setlength{\topmargin}{-0.25in}
\setlength{\oddsidemargin}{0.25in}
\setlength{\evensidemargin}{0.25in}

\CompileMatrices

\newtheorem{theorem}{Theorem}[section]
\newtheorem{lemma}[theorem]{Lemma}
\newtheorem{corol}[theorem]{Corollary}
\newtheorem{prop}[theorem]{Proposition}
\newtheorem{claim}[theorem]{Claim}

{\theoremstyle{definition} \newtheorem{defin}[theorem]{Definition}}
{\theoremstyle{remark} \newtheorem{remark}[theorem]{Remark}
\newtheorem{example}[theorem]{Example}}

\newenvironment{demo}{\textbf{Proof.}}{\qed}

\newcommand{\Abb}{{\mathbb{A}}}
\newcommand{\Cbb}{{\mathbb{C}}}
\newcommand{\Pbb}{{\mathbb{P}}}
\newcommand{\Qbb}{{\mathbb{Q}}}
\newcommand{\Tbb}{{\mathbb{T}}}
\newcommand{\Zbb}{{\mathbb{Z}}}

\newcommand{\cA}{{\mathscr A}}
\newcommand{\cC}{{\mathscr C}}
\newcommand{\cE}{{\mathscr E}}
\newcommand{\cI}{{\mathscr I}}
\newcommand{\cL}{{\mathscr L}}
\newcommand{\cO}{{\mathscr O}}

\newcommand{\one}{1\hskip-3.5pt1}
\newcommand{\csm}{{c_{\text{SM}}}}
\newcommand{\hcA}{{\hat \cA}}

\DeclareMathOperator{\rk}{rk}

\DeclareMathOperator{\Var}{Var}
\DeclareMathOperator{\Der}{Der}

\newcommand{\qede}{\hfill$\lrcorner$}


\title[Euler characteristics and Chern classes]{
Euler characteristics of general linear sections 
and polynomial Chern classes
}
\author{Paolo Aluffi}
\address{
Mathematics Department, 
Florida State University,
Tallahassee FL 32306, U.S.A.
}
\email{aluffi@math.fsu.edu}

\begin{document}

\begin{abstract}
We obtain a precise relation between the Chern-Schwartz-MacPherson class
of a subvariety of projective space and the Euler characteristics of its general
linear sections. In the case of a hypersurface, this leads to simple proofs of
formulas of Dimca-Papadima and Huh for the degrees of the polar map of a
homogeneous polynomial, extending these formula to any algebraically closed
field of characteristic~$0$, and proving a conjecture of Dolgachev on 
`homaloidal' polynomials in the same context. We generalize these
formulas to subschemes of higher codimension in projective space.

We also describe a simple approach to a theory of `polynomial Chern classes'
for varieties endowed with a morphism to projective space, recovering 
properties analogous to the Deligne-Grothendieck axioms from basic properties
of the Euler characteristic. We prove that the polynomial Chern class defines
homomorphisms from suitable relative Grothendieck rings of varieties to
$\Zbb[t]$.
\end{abstract}

\maketitle


\section{Introduction}\label{intro}

\subsection{}\label{introsu}
Let $X$ be a projective variety over an algebraically closed field $k$ of 
characteristic~$0$, endowed with a specific embedding in projective space. 
If $k=\Cbb$, one of the most important invariants of $X$ is its 
topological Euler characteristic, $\chi(X)$. There is a natural generalization of this 
invariant to arbitrary algebraically closed fields of characteristic zero:
if $X$ is nonsingular, we may take $\chi(X)$ to equal the degree of $c(TX)\cap [X]$;
and the singular case may be dealt with by using resolution of singularities 
(see~\S\ref{thmsec} for details). The resulting invariant has the expected properties
of the topological Euler characteristic: it is multiplicative on products, it satisfies 
inclusion-exclusion, and in particular it can be consistently defined for any 
locally closed subset of $\Pbb^n$.

While $\chi(X)$ does not depend on the embedding of $X$ into $\Pbb^n$, we can
access more refined invariants of the embedding by considering general linear
sections. We let $X_r=X\cap H_1\cap\cdots\cap H_r$ be the intersection of $X$
with $r$ general hyperplanes, and we assemble the Euler characteristics of these
loci into a generating polynomial of degree~$\le n$:
\[
\chi_X(t):=\sum_{r\ge 0} \chi(X_r)\cdot (-t)^r\quad.
\]
This polynomial may be defined for any locally closed subset of $\Pbb^n$.
Our main theme in this note is an interpretation of $\chi_X(t)$ in terms of the
{\em Chern-Schwartz-MacPherson\/} class $\csm(X)$. This is a class in
the Chow group of $X$, generalizing to (possibly) singular varieties the 
total Chern class of the tangent bundle of $X$ in the nonsingular case, and
satisfying a compelling functoriality property, which will be recalled 
in~\S\ref{thmsec}. Chern-Schwartz-MacPherson classes can also be defined 
over any algebraically closed field of characteristic~$0$, and the 
functoriality property mentioned above implies that if $X$ is complete, then the 
degree $\int \csm(X)$ equals $\chi(X)$.
In fact, a $\csm$ class may be defined for any constructible function on a
variety, and here we will associate with $X\subseteq \Pbb^n$ the class
$\csm(\one_X)\in A_*\Pbb^n$. (As a template to keep in mind, the information 
carried by this class for a nonsingular projective $X\subseteq \Pbb^n$ amounts
to the degrees of the components of different dimension in $c(TX)\cap [X]$.)
We will also write this class as a polynomial of degree $\le n$: we let
\[
\gamma_X(t)=\sum_{r\ge 0} \gamma_r\, t^r
\]
be the polynomial obtained from $\csm(\one_X)$ by replacing $[\Pbb^r]$
with $t^r$. This is another polynomial defined for any locally closed 
$X\subseteq \Pbb^n$. (In our template, $\gamma_r$ is the degree of 
$c_{\dim X-r}(TX)\cap [X]$ as a class in $\Pbb^n$.) 

We will prove that for all locally closed subsets of $\Pbb^n$ the polynomials 
$\gamma_X(t)$, $\chi_X(t)$ carry
precisely the same information. We consider the following linear transformation:
\[
p(t) \mapsto \cI(p):=\frac{t\cdot p(-t-1)+p(0)}{t+1}\quad.
\]
It is clear that if $p$ is a polynomial, then $\cI(p)$ is a polynomial of the same
degree. Also, it is immediately checked that $\cI(p)(0)=p(0)$, and 
$\cI(p)(-1)=p(0)+p'(0)$.
Further, $\cI$ is an {\em involution;\/}
in fact, if $p(t)=p(0)+t p_+(t)$, then $\cI(p)=p(0)-t p_+(-t-1)$, so that the effect
of $\cI$ is to perform a sign-reversing symmetry about $t=-1/2$ of the 
non-constant part of $p$.

\begin{theorem}\label{main}
For every locally closed set $X\subseteq \Pbb^n$, the involution $\cI$ interchanges 
$\chi_X(t)$ and $\gamma_X(t)$:
\[
\gamma_X=\cI(\chi_X)\quad,\quad \chi_X=\cI(\gamma_X)\quad.
\]
\end{theorem}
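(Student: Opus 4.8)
The plan is to exhibit both $\gamma_X(t)$ and $\chi_X(t)$ as ``shadows'' of the single class $\csm(\one_X)$ against explicit rational kernels, after which $\gamma_X=\cI(\chi_X)$ becomes a one-line manipulation; the reverse identity $\chi_X=\cI(\gamma_X)$ then follows for free because $\cI$ is an involution. Write $h=c_1(\cO(1))\in A_*\Pbb^n$, so that $[\Pbb^r]=h^{n-r}$ and $\int h^n=1$. The definition of $\gamma_X(t)$ gives at once $\gamma_X(t)=\int(1-th)^{-1}\csm(\one_X)$, since the coefficient of $t^r$ on the right is $\int h^r\,\csm(\one_X)$, i.e. the coefficient of $[\Pbb^r]$. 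The corresponding statement for $\chi_X(t)$ is the substantive one: I would prove the \emph{master formula} $\chi(X_r)=\int\bigl(h/(1+h)\bigr)^r\,\csm(\one_X)$ for every $r$, and sum it (using that $h$ is nilpotent) into $\chi_X(t)=\int(1+h)\bigl(1+(1+t)h\bigr)^{-1}\csm(\one_X)$. As a sanity check, the $r=0$ case is exactly MacPherson functoriality, $\int\csm(\one_X)=\chi(X)$, and for $X=\Pbb^n$ both sides of the master formula equal $n-r+1$.

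Granting these two integral representations, the theorem is immediate. Using $\cI(p)(0)=p(0)$ together with $\chi_X(0)=\chi(X)=\gamma_X(0)$, I compute $\chi_X(-t-1)=\int(1+h)(1-th)^{-1}\csm(\one_X)$, so that $\cI(\chi_X)(t)=\frac{1}{t+1}\int\bigl(\frac{t(1+h)}{1-th}+1\bigr)\csm(\one_X)$. The bracketed kernel collapses, $\frac{t(1+h)}{1-th}+1=\frac{t+1}{1-th}$, and after cancelling $t+1$ this is exactly $\int(1-th)^{-1}\csm(\one_X)=\gamma_X(t)$. Thus the entire content of the theorem is concentrated in the master formula, and the role of $\cI$ is merely to implement the substitution $t\mapsto-t-1$, which carries the kernel $(1+h)(1+(1+t)h)^{-1}$ to $(1+h)(1-th)^{-1}$, the $(t+1)$ normalization then clearing the spurious factor $1+h$.

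The master formula is the heart of the matter, and I expect it to be the main obstacle. I would reduce it to a single generic-hyperplane statement. On the $\chi$ side there is an elementary recursion: a general codimension-$r$ subspace of a general hyperplane $H$ is a general codimension-$(r+1)$ subspace of $\Pbb^n$, so $\chi(X_{r+1})=\chi\bigl((X\cap H)_r\bigr)$, and reindexing gives $\chi_{X\cap H}(t)=\bigl(\chi_X(0)-\chi_X(t)\bigr)/t$. Matching this under the involution forces the corresponding $\csm$-side recursion, which is the key geometric lemma: for general $H$, $\gamma_{X\cap H}(t)=\bigl(\gamma_X(t)-\gamma_X(-1)\bigr)/(t+1)$; equivalently, writing $\gamma'_s$ for the coefficients of $\csm(\one_{X\cap H})$, one has $\gamma'_s+\gamma'_{s-1}=\gamma_s$ for $s\ge 1$, with a single dimension-zero correction $\gamma'_0=\gamma_X(0)-\gamma_X(-1)$. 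Evaluating at $t=0$ already exhibits the flavor of the lemma: the left side is $\chi(X\cap H)$ by functoriality, so the lemma encodes in particular the identity $\sum_r(-1)^r\gamma_r=\chi(X)-\chi(X\cap H)=\chi(X\setminus H)$ between the alternating sum of $\csm$-degrees and the Euler characteristic of the generic affine part.

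To prove this key lemma I would combine the additivity of $\csm$ on $\one_X=\one_{X\setminus H}+\one_{X\cap H}$ with MacPherson functoriality, the essential geometric input being the behavior of $\csm$ under a generic hyperplane section: for general $H$ the class $\csm(\one_{X\cap H})$ is the stated divided difference of $\csm(\one_X)$ at $t=-1$, the only correction being the dimension-zero class recording $\chi(X\setminus H)$. Since that correction is a multiple of $[\Pbb^0]$, it is annihilated by the operator $h/(1+h)$, and this is exactly what keeps the iteration clean: feeding the lemma into the $\chi$-recursion and inducting (the base case $\dim X\le 0$ being immediate, and the constant terms automatically matching since $\gamma_X(0)=\chi(X)=\chi_X(0)$ while $\cI$ fixes the value at $0$) upgrades the one-hyperplane statement to the master formula, hence to the theorem. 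I expect pinning down the generic-hyperplane lemma with exactly the right correction term---ultimately a transversality statement governing how the Euler characteristics of the linear sections, or the local Euler obstructions, vary in a generic pencil---to be where the real work lies.
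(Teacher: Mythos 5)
Your formal scaffolding is correct, and in places cleaner than the paper's own computation: the ``master formula'' $\chi(X_r)=\int\bigl(h/(1+h)\bigr)^r\,\csm(\one_X)$ is exactly the identity the paper establishes before its binomial-coefficient manipulation, and your kernel calculation showing that this identity plus $\gamma_X(t)=\int(1-th)^{-1}\csm(\one_X)$ forces $\cI(\chi_X)=\gamma_X$ is valid (and slicker than the expansion in the paper). Moreover, your ``key geometric lemma'' $\gamma_{X\cap H}(t)=\bigl(\gamma_X(t)-\gamma_X(-1)\bigr)/(t+1)$ is precisely the polynomial translation of the paper's Proposition~\ref{csmhypsec}, namely $\csm(\one_{X\cap H})=\frac{H}{1+H}\cdot\csm(\one_X)$ in $A_*\Pbb^n$; iterating it does give the master formula. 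So you have reduced the theorem to exactly the right statement.

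But you have not proved that statement, and this is a genuine gap rather than a routine verification: your sketch is circular, since you propose to prove the lemma by combining additivity and functoriality with ``the essential geometric input being the behavior of $\csm$ under a generic hyperplane section'' --- which is the lemma itself. Additivity and functoriality alone cannot produce it; all the geometry of the theorem is concentrated there, as you yourself concede (``where the real work lies''). The paper's route is substantive: by additivity and inclusion-exclusion it reduces to $X$ a hypersurface of $\Pbb^n$; it then invokes the explicit formula of Theorem~\ref{1995thm}, expressing $\csm(\one_X)$ through the Segre class $s(JX,\Pbb^n)$ of the singularity subscheme, which converts the lemma into the Segre-class identity $H\cdot s(JX,\Pbb^n)=s(J(X\cap H),H)$; and this identity in turn requires two nontrivial inputs: a blow-up comparison giving $H\cdot s(JX,\Pbb^n)=s(H\cap JX,H)$ when $H$ meets the supports of the normal cone of $JX$ properly, and Teissier's idealistic Bertini theorem, which guarantees that $J(X\cap H)$ and $H\cap JX$ have the same integral closure --- hence the same Segre class --- for general $H$. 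Your closing guess that the missing ingredient is ``a transversality statement'' about Euler characteristics or local Euler obstructions in a generic pencil does not, as stated, supply any of this: the difficulty is not bookkeeping of the dimension-zero correction term (which your formalism handles correctly) but controlling the singularity subscheme of a generic hyperplane section, which needs a theorem of idealistic-Bertini type. As written, your proposal proves the theorem modulo exactly the proposition to which the paper devotes its geometric work.
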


Theorem~\ref{main} is a straightforward exercise for $X$ nonsingular
and projective. Its extension to arbitrarily singular quasi-projective varieties is 
not technically demanding, but appears to carry significant information.

\subsection{}\label{introDPH}
To see why Theorem~\ref{main} may be interesting, consider the (very) special
case
in which~$X$ is the complement $D(F)$ of a hypersurface in $\Pbb^n$, given 
by the vanishing of a homogeneous polynomial $F(x_0,\dots,x_n)$. Using an 
expression for the $\csm$ class from \cite{MR1956868}, it is easy to show that 
the degree of the `polar' (or `gradient') map $\Pbb^n \dashrightarrow \Pbb^n$ 
given by $(\partial F/\partial x_0,\dots,\partial F/\partial x_n)$ equals 
$(-1)^n\gamma_{D(F)}(-1)$ (see~\S\ref{polardegs}). By Theorem~\ref{main},
\[
\gamma_{D(F)}(-1)=\chi_{D(F)}(0)+\chi'_{D(F)}(0)=\chi(D(F))
-\chi(D(F)\cap H)=\chi(D(F)\smallsetminus H)
\]
for a general hyperplane $H$. Over $\Cbb$, this formula for the polar degree was
obtained by Dimca and Papadima as a consequence of their description of the
homotopy type of the complement $D(F)$ (\cite{MR2018927}, Theorem~1). 
The argument deriving this formula from Theorem~\ref{main} works over any 
algebraically closed field of characteristic zero, and hence it also extends to this 
context the consequence that the degree of the polar map only depends on the 
{\em reduced\/} polynomial $F_\text{red}$. In particular, $F$ is `homaloidal'
if and only if $F_\text{red}$ is; this fact was conjectured by Dolgachev 
(\cite{MR1786486}, \S3).
More generally, the argument extends easily to yield a formula for the 
{\em Huh-Teissier-Milnor\/} numbers~$\mu^{(i)}$ defined in \cite{Huh}
in terms of mixed multiplicities. Theorem~\ref{main} implies
\[
\mu^{(i)}=(-1)^i \chi(D(F)\cap (\Pbb^i\smallsetminus \Pbb^{i-1}))\quad,
\]
where $\Pbb^i$ denotes a general linear subspace of dimension~$i$
(Corollary~\ref{Huhf}).
This formula is given in Theorem~9 (1) of \cite{Huh}, as a consequence
of an explicit topological description of the intersection $D(f)\cap \Pbb^i$.
The argument sketched above does 
less, since it only yields the numerical consequence of this more
refined topological information; but it proves the validity of this formula
over any algebraically closed field of characteristic zero, and is in a
sense more straightforward.

This approach also allows us to generalize some of these considerations to 
arbitrary codimension. We propose a definition of `polar degrees' for arbitrary
subschemes $X\subseteq \Pbb^n$, giving a relation between these degrees
and the Euler characteristics of sections of $\Pbb^n\smallsetminus X$
generalizing the Dimca-Papadima-Huh formulas for hypersurfaces recalled 
above. We prove
that these polar degrees only depend on the support $X_\text{red}$,
thus generalizing Dolgachev's conjecture to arbitrary subschemes of $\Pbb^n$.
The polar degrees of $X$ are given in terms of generators for an ideal defining
$X$; the fact that they are independent of the choice of the generators
would deserve to be understood from a more explicitly algebraic
viewpoint. In our approach, the independence follows from the relation 
with Chern-Schwartz-MacPherson classes. 

Providing algebro-geometric proofs of the formula for the polar degree and
Dolgachev's conjecture was a natural problem. Fassarella, Pereira, and 
Medeiros have developed an algebro-geometric approach through foliations 
(\cite{MR2361094}, \cite{FM}), and also obtain the Dimca-Papadima formula
and Huh's generalization to higher order polar degrees; their results are
stated for complex hypersurfaces.
We note that some of the beautiful formulas for the polar degrees obtained
in~\cite{FM} have a natural explanation when the degrees are viewed in
terms of $\csm$ classes. (For example, formula (3) in \cite{FM} is an 
expression of the inclusion-exclusion property satisfied by $\csm$ classes.)

Previous work on Dolgachev's conjecture and homaloidal polynomials also
includes \cite{MR1879808}; \cite{MR2369940} (for product of linear forms, 
over fields of arbitrary characteristic); \cite{MR2431661}; and \cite{MR2654229}.
We are not aware of work on polar degrees in higher codimension.
Over $\Cbb$, the relation between $\csm$ classes and the Huh-Teissier-Milnor 
numbers is also observed in \cite{Huh} and further employed very effectively 
in the recent preprint~\cite{Huh2}, which also includes applications to the problem
of studying homaloidal polynomials.

\subsection{}
Details for the application to Dimca-Papadina/Huh formulae, and the 
generalization to higher codimension, are given in
\S\ref{DPH}. In \S\ref{EC} we sketch a general framework suggested by
Theorem~\ref{main}. The fact that the information carried by the naive Euler 
polynomial $\chi_X(t)$ and the more sophisticated Chern class polynomial
$\gamma_X(t)$ is precisely the same indicates that it should be possible
to give a simple treatment of `polynomial' $\csm$ classes, based solely on 
the Euler
characteristic. The target of the $\csm$ natural transformation is the Chow
functor; while this is a virtue of the full theory, it is an obstacle if one is
interested in e.g., studying the motivic nature of these classes. For example, 
while $\csm$ classes satisfy a scissor relation, they do not factor through the 
Grothendieck group of varieties, simply because their target depends on the
variety. We propose a theory of Chern classes with constant {\em polynomial\/}
target for varieties endowed with a map to projective space. This theory
can be defined solely in terms of Euler characteristics and the involution 
appearing in Theorem~\ref{main}, and its main covariance property, which
parallels closely the functoriality of $\csm$ classes, is a simple cut-and-paste
exercise (cf.~Lemmata~\ref{fiberlemma} and~\ref{covarlem}). It is immediate
that the resulting `polynomial Chern classes' $c_*$ with values in $\Zbb[t]$ 
factor through the Grothendieck
group of varieties endowed with morphisms to projective space. 
We prove that the classes $c_*$ also define {\em ring\/} homomorphisms, 
preserving different products one can define on this Grothendieck group
(Propositions~\ref{mult} and~\ref{mult2}). This is useful in concrete computations.

Relative Grothendieck groups of varieties have been applied to a very general
theory of characteristic classes for singular varieties in~\cite{MR2646988}.

{\bf Acknowledgements.} The author thanks Ettore Aldrovandi, Xia Liao, Miguel
Marco-Buzun\'ariz, and Matilde 
Marcolli for useful remarks, and Caltech for hospitality during the preparation of this 
note. The fact that the Chern-Schwartz-MacPherson class of an embedded variety
carries information on Euler characteristic of general linear sections was pointed
out by Deligne in a reply to a letter the author sent him at the time of the preparation
of~\cite{MR2282409}.


\section{Euler and Chern}\label{thmsec}

\subsection{The Euler characteristic}\label{introEc}
Throughout the paper, we work over a fixed algebraically closed field $k$ of 
characteristic~$0$. Part of our goal is to emphasize that familiar notions from
complex geometry generalize to this context, so for once we do not encourage
the reader to assume that $k=\Cbb$. Morphisms are implicitly assumed to be 
separable, of finite type; `point' will mean `closed point'.

The {\em Grothendieck group\/} of $k$-varieties, $K(\Var_k)$ is the abelian 
group generated by isomorphism classes of $k$-varieties modulo the scissor
relation
\[
[X] = [U] + [Z]
\]
for every closed $Z\subseteq X$, $U=X\smallsetminus Z$. The operation defined
on generators by $[X]\cdot [Y]:=[X\times Y]$ endows $K(\Var_k)$ with a structure
of ring.

\begin{lemma}\label{Bittner}
There is a unique ring homomorphism $\chi:K(\Var_k) \to \Zbb$ such that if $X$ is
nonsingular and projective, then $\chi([X])=\int c(TX)\cap [X]$.
\end{lemma}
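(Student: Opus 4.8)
The plan is to establish existence by constructing $\chi$ via resolution of singularities, and then to leverage the scissor relation and the properties of $K(\Var_k)$ to verify that the construction is well-defined, multiplicative, and unique. For existence, I would first define a candidate value on an arbitrary variety $X$ by choosing a resolution and an appropriate decomposition. Concretely, any variety $X$ admits a stratification into nonsingular locally closed pieces; by the scissor relation, $[X]$ decomposes in $K(\Var_k)$ as a sum of classes of nonsingular (not necessarily projective) varieties. So the first reduction is: it suffices to define $\chi$ consistently on nonsingular varieties and to check compatibility with the scissor relation. For a nonsingular projective $X$ the value is forced to be $\int c(TX)\cap[X]$; for a nonsingular but noncomplete $U$, I would choose a nonsingular compactification $\overline{U}$ with $\overline{U}\smallsetminus U$ a normal crossings divisor $D=\bigcup D_i$, and define $\chi(U)$ by inclusion-exclusion over the strata of $D$, each stratum being a nonsingular projective variety whose value is already fixed. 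This mirrors the recipe sketched in the introduction.

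The main work is then to verify well-definedness: the value assigned to a nonsingular variety must not depend on the chosen compactification, and the resulting assignment must respect the scissor relation $[X]=[U]+[Z]$ for arbitrary $X$ and closed $Z$. For independence of the compactification, I would use the weak factorization theorem, which says any two smooth compactifications with normal crossings boundary are connected by a sequence of blowups and blowdowns along smooth centers; one then checks that the inclusion-exclusion expression is invariant under a single such blowup, which is an elementary Chern class computation for the projective bundle / exceptional divisor. Alternatively, and more cleanly, I would invoke Bittner's presentation of $K(\Var_k)$ (the \textbf{blowup relations}), which states that $K(\Var_k)$ is generated by classes of smooth projective varieties subject only to $[\emptyset]=0$ and $[\operatorname{Bl}_Y X]-[E]=[X]-[Y]$ for $Y\subseteq X$ a smooth closed subvariety of a smooth projective $X$ with exceptional divisor $E$. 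Under this presentation, to define the homomorphism $\chi$ it suffices to check that $\int c(TX)\cap[X]$ satisfies the single blowup relation $\chi(\operatorname{Bl}_Y X)-\chi(E)=\chi(X)-\chi(Y)$; this is precisely the additivity of the topological Euler characteristic under blowup, provable over any characteristic-$0$ field by the Riemann--Roch / Chern class formula for the tangent bundle of a blowup.

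Given existence as an additive homomorphism, multiplicativity $\chi([X][Y])=\chi([X])\chi([Y])$ follows by reducing to the smooth projective case via the scissor relation (both sides are bi-additive, so it suffices to check on generators) and then invoking the Whitney product formula: for $X,Y$ smooth projective, $c(T(X\times Y))=c(TX)\boxtimes c(TY)$, whence $\int c(T(X\times Y))\cap[X\times Y]=\bigl(\int c(TX)\cap[X]\bigr)\bigl(\int c(TY)\cap[Y]\bigr)$ by the projection formula. Uniqueness is immediate: the scissor relation together with resolution of singularities forces the value on every generator to be determined by the values on smooth projective varieties, and those are pinned down by the normalization condition.

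The step I expect to be the main obstacle is the well-definedness / blowup-invariance verification. Everything else (multiplicativity, uniqueness, the reduction to smooth projective generators) is formal once a consistent $\chi$ exists. The cleanest route is to quote Bittner's theorem and reduce the entire existence question to the single blowup identity for the Chern-class integral; the remaining content is then the Euler characteristic blowup formula, which I would verify by computing $c(T\operatorname{Bl}_Y X)$ in terms of $c(TX)$, $c(TY)$, and the excess bundle, and integrating. This is the technical heart, and it is exactly the kind of computation the excerpt signals is routine (\emph{``the singular case may be dealt with by using resolution of singularities''}).
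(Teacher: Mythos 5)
Your proposal is correct and, in the route you ultimately settle on, is essentially identical to the paper's proof: both invoke Bittner's presentation of $K(\Var_k)$ by smooth projective generators and blow-up relations, reduce existence to the identity $\chi(B\ell_ZX)-\chi(E)=\chi(X)-\chi(Z)$ for the Chern-class integral (verified via the Euler sequence on the exceptional divisor and Fulton's blow-up formula, Theorem~15.4 of \cite{85k:14004}), and dispose of multiplicativity by the Whitney formula on products of smooth projective varieties. Your preliminary sketch via compactifications and weak factorization is a viable alternative but is superseded by the Bittner argument, exactly as you note.
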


\begin{proof}
By Theorem~3.1 in~\cite{MR2059227}, $K(\Var_k)$ admits an alternative
presentation as the abelian group generated by isomorphism classes of smooth
projective $k$-varieties modulo the relation $[B\ell_ZX]-[E]=[X]-[Z]$ for $X$
smooth and projective and $Z\subseteq X$ a closed smooth subvariety; here
$B\ell_ZX$ is the blow-up of $X$ along $Z$ and $E$ is the exceptional divisor.
It suffices therefore to verify that the degree of the top Chern class of the
tangent bundle satisfies these relations, and the relation defining the ring 
structure. This latter check is immediate. As for the blow-up relations,
since $p:E\to Z$ may be identified with the projective normal bundle 
of $Z$ in $X$, $c(TE)$ is determined by the Euler sequence:
\[
\xymatrix{
0 \ar[r] &
\cO \ar[r] &
p^*N_ZX\otimes \cO(1) \ar[r] &
TE \ar[r] & 
p^*TZ \ar[r] &
0\quad.
}
\]
From this it is straightforward to verify that
\[
\int c(TE)\cap [E] = (d+1)\int c(TZ)\cap [Z]\quad,
\]
where $d=\rk N_ZX$ is the codimension of $Z$ in $X$. Thus, what needs to
be checked is that
\[
\int c(TB\ell_ZX)\cap [B\ell_Z X] - \int c(TX)\cap [X] = d \int c(TZ)\cap [Z]\quad,
\]
for $Z$ a closed smooth subvariety of a smooth projective variety $X$.
This can be done by using the explicit formula for blowing up Chern classes
given in~\cite{85k:14004}, Theorem~15.4.
\end{proof}

We will write $\chi(U)$ for the value taken by the homomorphism $\chi$ on
$[U]$, and call this number the {\em Euler characteristic\/} of $U$. 
Every locally closed subset of a complete variety has a class in $K(\Var_k)$,
so every such set has a well-defined Euler characteristic. Of course if 
$k=\Cbb$, then $\chi$ agrees with the ordinary topological Euler characteristic 
(with compact support).

\begin{remark}
The fact that $\chi$ defines a homomorphism $K(\Var_k)\to \Zbb$ captures
the usual properties of the ordinary Euler characteristic: inclusion-exclusion
($\chi(X\cup Y)=\chi(X)+\chi(Y)-\chi(X\cap Y)$) and multiplicativity on locally 
trivial fibrations. Also, if $X \to Y$ is smooth and proper, then $\chi(X)
=\chi(Y)\cdot \chi_f$, where $\chi_f$ is the Euler characteristic of any fiber.
\end{remark}

This seems the appropriate place to point out the following observation,
that will be used in~\S\ref{EC}:

\begin{lemma}\label{fiberlemma}
Let $f: X\to Y$ be a morphism of $k$-varieties. Then there exist subvarieties 
$V_1,\dots,V_r$ of $Y$ and integers $m_1,\dots, m_r$ such that $\forall p\in Y$,
\[
\chi(f^{-1}(p))=\sum_{V_j\ni p} m_j\quad.
\]
Further, for every subvariety $W\subseteq Y$,
\[
\chi(f^{-1}(W))=\sum_{i=1}^r m_j\, \chi(W\cap V_j)\quad.
\]
\end{lemma}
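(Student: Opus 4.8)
The plan is to prove this by constructive stratification of $Y$ according to the Euler characteristic of the fibers of $f$. First I would invoke a constructibility principle: the function $p \mapsto \chi(f^{-1}(p))$ is a \emph{constructible} function on $Y$. This is the crux of the matter, and I address the obstacle below. Granting constructibility, the function takes finitely many values and is constant on the members of a finite partition of $Y$ into locally closed subsets, each a finite union and difference of subvarieties.

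Once I have a constructible function $\varphi(p) := \chi(f^{-1}(p))$, I would express it in the desired form. Writing $\varphi$ as a $\Zbb$-linear combination of indicator functions $\one_{Z}$ of locally closed strata, I can replace each $\one_{Z}$ with a combination of indicator functions $\one_{V}$ of \emph{closed} subvarieties using inclusion-exclusion ($\one_{U} = \one_{X} - \one_{Z}$ for $U = X \smallsetminus Z$). This rewrites $\varphi = \sum_{j=1}^{r} m_j \one_{V_j}$ for subvarieties $V_j \subseteq Y$ and integers $m_j$, which is precisely the first assertion: for every $p \in Y$, $\chi(f^{-1}(p)) = \sum_{V_j \ni p} m_j$.

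For the second formula I would pass from the pointwise identity to an identity over an arbitrary subvariety $W \subseteq Y$. The key input is the additivity of $\chi$ recorded after Lemma~\ref{Bittner}: $\chi$ satisfies inclusion-exclusion and is multiplicative on locally trivial fibrations, and more precisely $\chi(X) = \chi(Y)\cdot \chi_f$ when $f$ is smooth and proper with fiber Euler characteristic $\chi_f$. Refining the stratification of $Y$ so that $f$ restricts to a fibration (in the relevant sense) over each stratum, and using that $\chi$ of a total space decomposes as a sum over strata of (Euler characteristic of stratum)$\times$(Euler characteristic of fiber), I would compute $\chi(f^{-1}(W)) = \sum_{\text{strata } S} \chi(W \cap S)\,\varphi|_S$. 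Expanding $\varphi|_S$ via $\varphi = \sum_j m_j \one_{V_j}$ and reorganizing yields $\chi(f^{-1}(W)) = \sum_{j=1}^{r} m_j\,\chi(W \cap V_j)$.

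The main obstacle is establishing the constructibility of $p \mapsto \chi(f^{-1}(p))$ over a general algebraically closed field $k$ of characteristic zero, where one cannot directly appeal to the classical topological theory of constructible functions over $\Cbb$. The natural remedy is to reduce to a situation where $\chi$ of the fibers is manifestly constructible: by generic smoothness and Noetherian induction, $f$ is a smooth fibration over a dense open $U_0 \subseteq Y$ after base change, so $\chi(f^{-1}(p))$ is constant on $U_0$ by the smooth-and-proper multiplicativity property (passing to a compactification and using additivity to control the boundary); one then recurses on the lower-dimensional complement $Y \smallsetminus U_0$. This Noetherian induction produces the finite stratification and simultaneously the integers $m_j$, and it works uniformly over any such $k$ since it relies only on generic smoothness and the properties of $\chi$ recorded in the excerpt rather than on transcendental methods.
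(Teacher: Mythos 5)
Your strategy is the same one the paper follows---reduce to the smooth and proper case, where constancy of fiber Euler characteristics and multiplicativity are immediate, via generic smoothness, compactification, additivity, and induction---and your passage from constructibility of $p\mapsto\chi(f^{-1}(p))$ to the two displayed formulas is sound. But the key step, as you have written it, does not close: resolution of singularities is missing, and it is genuinely needed (it is one of the four ingredients the paper names, alongside Nagata's embedding theorem, generic smoothness, and inclusion-exclusion). It breaks in two places. First, generic smoothness requires a nonsingular source, and your $X$ is arbitrary; you must first stratify $X$ into smooth locally closed pieces (or resolve it) before invoking it. Second, and more seriously, once you compactify $f^{-1}(U_0)\to U_0$ to a proper morphism $\bar f\colon \bar X\to U_0$, the total space $\bar X$ is in general singular even when $X$ is smooth, so $\bar f$ is proper but not smooth, and the smooth-and-proper multiplicativity you invoke cannot be applied to it. Additivity does control the boundary $\bar X\smallsetminus f^{-1}(U_0)$, but it does nothing about the singularities of $\bar X$ itself; the characteristic-zero fix is to resolve $\bar X$ (an isomorphism over the smooth part), apply generic smoothness to the resolved proper morphism, and absorb both the boundary and the exceptional locus into lower-dimensional correction terms.

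This exposes a second structural issue: your recursion runs only on the base (Noetherian induction on $Y\smallsetminus U_0$), but the correction terms just described---the boundary of the compactification and the exceptional locus of the resolution---map onto the same open set $U_0$, not into $Y\smallsetminus U_0$, so their fibers sit over exactly the points $p\in U_0$ you are trying to handle, and Noetherian induction on $Y$ alone never reaches them. Since these correction terms have source of dimension $<\dim X$, the standard remedy is a double induction: on $\dim X$ for the correction terms, and Noetherian on $Y$ for the complement of $U_0$. With resolution of singularities and this double induction added, your sketch becomes precisely the argument the paper outlines, whose details it defers to \cite{MR2282409}, \S5.4--6.
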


Indeed, the existence of $V_1,\dots,V_r$ follows from the fact that the Euler 
characteristic of fibers is constant on a nonempty open set.
Both this fact and the second assertion follow from standard techniques: Nagata's
embedding theorem, resolution of singularities, generic smoothness, and 
inclusion-exclusion for $\chi$ may be used to reduce to the case of $f$ smooth and 
proper, for which the assertions are trivial. Details are left to the reader, and may 
be distilled from~\cite{MR2282409}, \S5.4-6.
\qede

\subsection{Chern-Schwartz-MacPherson classes}\label{introCSM}
For a variety $X$, we denote by $\cC(X)$ the abelian group of $\Zbb$-valued 
{\em constructible functions\/} on $X$; thus, every $\varphi\in \cC(X)$ may be
written as a finite sum $\sum_i n_i \one_{Z_i}$ where $n_i\in \Zbb$, $Z_i$ are
subvarieties of $X$, and $\one_{Z_i}$ is the function giving $1$ for $p\in Z_i$
and $0$ for $p\not\in Z_i$. The assignment $X\mapsto \cC(X)$ defines a
covariant functor to abelian groups: if $f: X\to Y$ is a morphism, we may define 
a push-forward
\[
f_*: \cC(X) \to \cC(Y)
\]
by letting $f_*(\one_Z)(p) = \chi(Z\cap f^{-1}(p))$ for any subvariety $Z\subseteq X$
and $p\in Y$, and extending by linearity. Note that according to this definition
\[
f_*(\one_X) = \sum_i m_i \one_{V_i}\quad,
\]
where the varieties $V_i$ are those appearing in Lemma~\ref{fiberlemma}.

On the category of complete $k$-varieties and proper morphisms there exists a 
unique natural transformation $\cC\leadsto A_*$, normalized by the condition that if
$X$ is nonsingular and complete, then $\one_X \mapsto c(TX)\cap [X]$.
Over $\Cbb$ and in homology, this fact is due to R.~MacPherson 
(\cite{MR0361141}). With suitable positions, the class associated with $\one_X$
for a (possibly) singular $X$ agrees with the class previously defined by 
M.-H.~Schwartz (\cite{MR35:3707}, \cite{MR32:1727}). The theory was 
extended to arbitrary algebraically closed fields of characteristic~$0$ in
\cite{MR1063344}; the treatment in \cite{MR2282409} includes an extension
to non-complete varieties and not necessarily proper morphisms.
We call the class associated with a constructible function $\varphi$ on a
variety $X$ the `Chern-Schwartz-MacPherson class' of $\varphi$,
denoted $\csm(\varphi)$. If $Z\subseteq X$ and the context is clear,
we denote by $\csm(Z)$ the class $\csm(\one_Z)\in A_*X$.

The normalization and functoriality properties of $\csm$:
\begin{itemize}
\item $\csm(\one_X) = c(TX)\cap [X]$ for $X$ nonsingular and complete, and
\item $f_* \csm(\varphi) = \csm(f_*\varphi)$ for $f: X\to Y$ a proper morphism
\end{itemize}
(and linearity) determine $\csm$ uniquely, by resolution of singularities.
We note that if $X_1,X_2\subseteq X$, then $\one_{X_1\cup X_2}
=\one_{X_1}+\one_{X_2}-\one_{X_1\cap X_2}$, and hence
\[
\csm(X_1\cup X_2)=\csm(X_1)+\csm(X_2)-\csm(X_1\cap X_2)
\quad\text{in $A_*X$:}
\]
thus, $\csm$ classes (like $\chi$) satisfy inclusion-exclusion. In fact, the
degree of $\csm(X)$ agrees with $\chi(X)$: if $X$ is complete, then
\begin{equation}\label{degcsm}
\int\csm(X)=\chi(X)\quad.
\end{equation}
To see this, apply functoriality to the constant map from $X$ to a point.
This simple observation will be crucial in what follows.

\subsection{The $\csm$ class of a hypersurface}\label{csmhyp}
As this will be needed in a proof given below, we recall an expression for 
$\csm(X)$ in the case that $X$ is a 
hypersurface in a nonsingular variety $V$. We will use the following notation: 
if $a\in A_{\dim V-i}V$ is a class in codimension~$i$, and $\cL$ is a line bundle 
on $V$, we let
\[
a^\vee:=(-1)^i a\quad,\quad a\otimes \cL = \frac{a}{c(\cL)^i}\quad,
\]
and extend these operations to $A_*V$ by linearity.
(As $c(\cL)=1+ c_1(\cL)$, and $c_1(\cL)$ is nilpotent, $c(\cL)$ has an 
inverse as an operator over $A_*V$; this is what the notation $a/c(\cL)$ means.) 
For properties satisfied by these
operations, we address the reader to~\cite{MR96d:14004}, \S2. In particular, 
$(A\otimes \cL_1)\otimes \cL_2=A\otimes (\cL_1\otimes \cL_2)$
for all line bundles $\cL_1$, $\cL_2$, and
\[
(c(\cE)\cap A)\otimes \cL =c(\cE\otimes \cL)\cdot c(\cL)^{-\rk \cE} \cap (A\otimes \cL)
\]
for all $A\in A_*V$ and all vector bundles $\cE$ on $V$. We routinely abuse
language and write $A\otimes \cL$ for $A\in A_*Y$ if $Y\subseteq V$; if we 
need to emphasize that the codimension is taken in $V$, we write $A\otimes_V \cL$.

\begin{theorem}[\cite{MR2001i:14009}, Theorem~I.4]\label{1995thm}
Let $X$ be a hypersurface in a nonsingular complete variety $V$. Then
\[
\csm(\one_X)=c(TV)\cap \left(\frac{[X]}{1+X}+\frac 1{1+X} \cap \left( s(JX,V)^\vee
\otimes_V \cO(X)\right)\right)
\]
in $A_*V$.
\end{theorem}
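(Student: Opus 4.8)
The plan is to recognize the two terms of the asserted formula as, respectively, the Chern--Fulton class of $X$ and a correction term supported on the singular locus, and then to compute the correction by resolving the gradient of a local equation for $X$.

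First I would dispose of the first summand. Since $X$ is a Cartier divisor in the nonsingular $V$, its normal bundle is $\cO(X)$ and its Segre class in $V$ is $s(X,V)=[X]/(1+X)$; thus $c(TV)\cap\frac{[X]}{1+X}$ is precisely the Chern--Fulton class $c_F(X)=c(TV)\cap s(X,V)$. This is a purely formal Segre-class identity and carries no information about the singularities of $X$. Because $c_F(X)$ and $\csm(\one_X)$ have the same top-dimensional term $[X]$ and agree whenever $X$ is nonsingular (both reduce to $c(TX)\cap[X]$ there), the difference $\csm(\one_X)-c_F(X)$ is a class supported on the singular locus of $X$, i.e.\ on the support of the singularity subscheme $JX$. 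The content of the theorem is therefore the identification of this correction with $c(TV)\cap\frac1{1+X}\cap\bigl(s(JX,V)^\vee\otimes_V\cO(X)\bigr)$.

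To compute the correction I would exploit the fact that $JX$ is cut out by the partial derivatives of a local equation $F$ of $X$; equivalently, the differential $dF$ defines a section of $T^*V\otimes\cO(X)$ whose zero scheme is $JX$. Blowing up $V$ along $JX$ (equivalently, resolving the gradient rational map) produces $\pi\colon\Til V\to V$ on which the exceptional data encode the Segre class $s(JX,V)$, and on which the gradient section becomes a divisor. The operations appearing in the statement have a clean conceptual origin in this picture: the twist $\otimes_V\cO(X)$ reflects that the natural section lives in $T^*V\otimes\cO(X)$ rather than in $T^*V$, while the duality sign $(-)^\vee$ reflects the passage between the (co)normal geometry over $\Til V$ and the tautological computation back on $V$, together with the standard relation $(c(\cE)\cap A)\otimes\cL=c(\cE\otimes\cL)c(\cL)^{-\rk\cE}\cap(A\otimes\cL)$ recalled above. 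The plan is then to push forward, via $\pi_*$ and functoriality of $\csm$, the computable contributions on $\Til V$ and to assemble them by inclusion--exclusion for $\csm$.

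The hard part will be to show that the weights produced by MacPherson's natural transformation genuinely match the Segre-theoretic contributions of $JX$. The class $\csm(\one_X)$ is a priori governed by Euler characteristics of general fibers and by local Euler obstructions along the strata of $X$, data that are not visibly Segre classes. The crux is the hypersurface-specific coincidence that the singular subscheme is defined by the partials of $F$, which ties the Nash/conormal geometry controlling the Euler obstruction to the gradient blow-up controlling $s(JX,V)$. Concretely, one must verify that the normalization $\one_X\mapsto c(TX)\cap[X]$ on the nonsingular locus, propagated through $\pi_*$ and the twist-and-dualize operations, reproduces exactly the stated polynomial in $[X]$ and $s(JX,V)$; checking this matching, rather than the formal bundle algebra, is where the real work lies.
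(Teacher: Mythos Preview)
This paper does not prove the theorem: it is quoted from the author's earlier paper \cite{MR2001i:14009} (Theorem~I.4) and used here as a black box. There is therefore no proof in the present paper against which to compare your proposal.

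That said, the paper does hint at the structure of the original argument (see Remark~\ref{csmred}): the proof in \cite{MR2001i:14009} proceeds by establishing a \emph{blow-up formula} showing that the right-hand side is invariant under blow-ups of $V$ along nonsingular centers, and then using resolution of singularities to reduce to the case where $X$ is a divisor with simple normal crossings, where both sides can be computed and compared directly. Your plan is different in spirit: you propose a single blow-up along $JX$ (the gradient resolution) and a direct identification of the Segre data with the MacPherson/Euler-obstruction data. Your identification of the first summand as the Chern--Fulton class $c_F(X)$ is correct and standard, and the intuition that the twist by $\cO(X)$ and the dual come from the section of $T^*V\otimes\cO(X)$ is sound. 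But the step you yourself flag as ``the real work''---matching the local Euler obstruction contributions to the Segre class of $JX$ after a single blow-up---is not how the cited proof goes, and it is not clear that this can be done directly without essentially reproving the blow-up invariance. The published argument sidesteps this by showing both sides transform identically under smooth blow-ups and then checking only the normal-crossing base case, which is a genuinely easier matching problem.
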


Here $JX$ denotes the {\em singularity subscheme\/} of $X$, locally defined
in $V$ by a local generator $F$ for the ideal of $X$ and by $\partial F$
as $\partial$ ranges over local sections of $\Der_V$. (Informally, $JX$
is defined by $F$ and its partial derivatives.) We use the Segre class
$s(-,-)$ in the sense of \cite{85k:14004}, and omit evident pull-backs and
push-forwards.

\begin{remark}\label{csmred}
The right-hand side of the formula in Theorem~\ref{1995thm} makes sense
for hypersurfaces with (possibly) multiple components: multiple components 
of $X$ appear as components of $JX$. It is a remarkable feature of this 
expression that it does {\em not\/} depend on the multiplicities of the components:
the change in $s(JX,V)$ due to the presence of multiplicities is precisely 
compensated by the other ingredients in the expression.
This is observed in \cite{MR2001i:14009}, \S2.1; briefly, the blow-up formula
proved in~\S3 of \cite{MR2001i:14009} reduces this fact to the simple normal
crossing case, where it can be worked out explicitly.

This is compatible with the fact that the left-hand side, 
$\csm(\one_X)$, should ignore the multiplicities of the components of $X$
because $\one_X$ is determined by set-theoretic information: if $U$ is the 
complement of $X$ in $V$, $\one_X=\one_V-\one_U=\one_{X_\text{red}}$. 
\qede\end{remark}

\subsection{$\csm$ classes and general hyperplane sections}
We now assume that $V=\Pbb^n$, and consider general hyperplane sections
of $\csm$ classes of locally closed subsets.

\begin{prop}\label{csmhypsec}
Let $U\subseteq \Pbb^n$ be any locally closed set (so that $\one_U$ is a constructible
function). Then for a general hyperplane $H\subseteq \Pbb^n$,
\[
\csm(\one_{U\cap H}) = \frac{H}{1+H} \cdot \csm(\one_U)
\]
in $A_*V$.
\end{prop}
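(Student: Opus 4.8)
The plan is to reduce the statement, by additivity and functoriality of $\csm$, to a direct computation on a nonsingular projective variety, where it follows from the normal bundle sequence and the projection formula.

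First I would dispose of the nonsingular case, which also fixes the mechanism of the general argument. Let $W$ be a nonsingular projective variety and $f:W\to\Pbb^n$ a morphism, and write $H_W=f^*\cO(1)$. For general $H$ the pullback $f^{-1}(H)$ is a nonsingular hypersurface in $W$ (generic smoothness in characteristic $0$), with normal bundle $\cO(H_W)|_{f^{-1}(H)}$. The normal bundle sequence then gives
\[
c(Tf^{-1}(H))\cap[f^{-1}(H)]=\frac{1}{1+H_W}\,c(TW)\cap(H_W\cap[W])=\frac{H_W}{1+H_W}\cdot(c(TW)\cap[W])
\]
in $A_*W$. Since $H_W=f^*H$ and the operator $H/(1+H)$ is a polynomial in $H$ (as $H^{n+1}=0$ in $A_*\Pbb^n$), pushing forward and applying the projection formula yields $f_*(c(Tf^{-1}(H))\cap[f^{-1}(H)])=\frac{H}{1+H}\cdot f_*(c(TW)\cap[W])$.

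Next I would reduce the general statement to this computation. Both $\csm$ and the operator $H/(1+H)$ are additive, so it is enough to verify the asserted identity for each summand in a convenient presentation of $\one_U$. Using functoriality of $\csm$, resolution of singularities, and induction on dimension, I would write $\one_U=\sum_i m_i\,(f_i)_*\one_{W_i}$ with each $f_i:W_i\to\Pbb^n$ proper and $W_i$ nonsingular projective, so that $\csm(\one_U)=\sum_i m_i\,(f_i)_*(c(TW_i)\cap[W_i])$. For a hyperplane $H$ general enough to serve the finitely many $f_i$ simultaneously, I would verify the pointwise identity $\one_{U\cap H}=\sum_i m_i\,(f_i|_{f_i^{-1}(H)})_*\one_{f_i^{-1}(H)}$ of constructible functions on $\Pbb^n$: for $p\in H$ one has $f_i^{-1}(p)\subseteq f_i^{-1}(H)$, so both sides take the value $\sum_i m_i\,\chi(f_i^{-1}(p))$ there, and both vanish off $H$. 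Applying $\csm$, then functoriality, then the nonsingular computation of the previous paragraph to each term, and summing, gives
\[
\csm(\one_{U\cap H})=\sum_i m_i\,\frac{H}{1+H}\cdot (f_i)_*(c(TW_i)\cap[W_i])=\frac{H}{1+H}\cdot\csm(\one_U).
\]

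The step I expect to be the main obstacle is the generic-smoothness input: arranging that a single general $H$ makes $f_i^{-1}(H)$ nonsingular for every $i$ at once, and establishing the presentation $\one_U=\sum_i m_i\,(f_i)_*\one_{W_i}$ itself. The former is a standard Kleiman--Bertini argument, since each condition is open dense in the dual $\check\Pbb^n$ and there are finitely many; the latter is precisely the inductive peeling-off of lower-dimensional loci that underlies the construction of $\csm$, the only care being to track the corrections supported in smaller dimension. Everything else is formal: the compatibility of restriction to $H$ with pushforward at the level of constructible functions, and the projection-formula commutation of the operator $H/(1+H)$ with proper pushforward.
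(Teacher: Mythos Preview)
Your proof is correct and takes a genuinely different route from the paper's. The paper reduces to the case of a (possibly singular) hypersurface $X\subseteq\Pbb^n$, invokes the explicit formula for $\csm(\one_X)$ in terms of the Segre class of the singularity subscheme $JX$ (Theorem~\ref{1995thm}), and then shows the proposition is equivalent to the Segre class identity $H\cdot s(JX,\Pbb^n)=s(JX',H)$ for $X'=X\cap H$; this last identity is established via Teissier's idealistic Bertini, which guarantees that $JX'$ and $H\cap JX$ have the same integral closure for general~$H$. Your argument bypasses all of this: you resolve singularities to write $\one_U$ as a $\Zbb$-linear combination of pushforwards $\,(f_i)_*\one_{W_i}$ from nonsingular projective sources, check the (elementary) pointwise identity $\one_{U\cap H}=\sum_i m_i\,(f_i)_*\one_{f_i^{-1}(H)}$, and appeal only to ordinary Bertini and the normal-bundle/projection-formula computation on each $W_i$.

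What each approach buys: the paper's route isolates the precise geometric content of the statement as a relation between Segre classes of Jacobian schemes, which is of independent interest and connects directly to the polar-degree formalism developed in \S\ref{DPH}; it also avoids resolving arbitrary varieties, working instead with the hypersurface structure. Your route is conceptually cleaner---it uses only the Deligne--Grothendieck axioms for $\csm$ plus Bertini on smooth varieties---and in fact it is essentially the mechanism the paper itself adopts later, in the proof of Lemma~\ref{normpro} and the surrounding discussion in \S\ref{EC}, where the polynomial theory is built directly from Euler characteristics of general linear sections without passing through the hypersurface formula. So you have anticipated the more axiomatic viewpoint of \S\ref{EC}; the cost is that you do not see the Segre-class identity \eqref{segreclassrel}, which the paper singles out as worth recording.
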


\begin{demo}
Since $U$ may be written as a set-difference of two closed sets, we may assume
that $U$ is itself closed, by additivity of $\csm$ classes. Since every closed subset
may be written as an intersection of hypersurfaces, by inclusion-exclusion we
may assume that $U=X$ is a hypersurface of $\Pbb^n$. Let $H\cong\Pbb^{n-1}$ be 
a general hyperplane and $X'=X\cap H$. By Theorem~\ref{1995thm},
\begin{align*}
\csm(\one_X) & = c(T\Pbb^n)\cap \left(\frac{[X]}{1+X}+\frac 1{1+X} \cap \left( 
s(JX,\Pbb^n)^\vee\otimes_{\Pbb^n} \cO(X)\right)\right)\quad\text{in $A_*\Pbb^n$, and} \\
\csm(\one_{X'}) & = c(T\Pbb^{n-1})\cap \left(\frac{[X']}{1+X'}+\frac 1{1+X'} \cap \left( 
s(JX',H)^\vee\otimes_{H} \cO(X')\right)\right)\quad\text{in $A_*\Pbb^{n-1}$.}
\end{align*}
It is a good exercise in the notation introduced in~\S\ref{csmhyp} to verify that the 
equality of these two classes is equivalent to
\begin{equation}\label{segreclassrel}
H\cdot s(JX,\Pbb^n) = s(JX',H)
\end{equation}
(cf.~\cite{ccdc}, \S3.2). Thus, we are reduced to proving~\eqref{segreclassrel}.
This follows from two observations:
\begin{itemize}
\item $H\cdot s(JX,\Pbb^n)= s(H\cap JX,H)$ if $H$ intersects properly the supports of 
the cone of $JX$ in $\Pbb^n$; and
\item For a general hyperplane $H$, $s(JX',H) = s(H\cap JX,H)$.
\end{itemize}
The first assertion may be verified by comparing the blow-up of $\Pbb^n$
along $JX$ and the blow-up of $H$ along $H\cap JX$; details may be found in
e.g., the proof of Claim~3.2 in~\cite{ccdc}. For the second assertion, the ideals
of $JX'$ and $H\cap JX$ have the same integral closure by Teissier's idealistic
Bertini, \cite{MR58:27964}, \S2.8 (see~\cite{Huh}, Lemma~31 for a transparent 
proof in the homogeneous case that does not depend on complex geometry). 
Subschemes with the same integral closure have the same Segre class since
they have the same normalized blow-up, and Segre classes are birational
invariants (\cite{85k:14004}, Proposition~4.2).

This verifies \eqref{segreclassrel}, concluding the proof of the proposition.
\end{demo}

\subsection{Proof of Theorem~\ref{main}}\label{proofmt}
Let $X$ be a locally closed subset of $\Pbb^n$. As in \S\ref{intro}, we let
\[
\chi_X(t):= \sum_{r\ge 0} (-1)^r \chi(H_1\cap\cdots \cap H_r\cap X)\, t^i\quad,
\]
where $H_1,H_2,\dots$ are general hyperplanes. Also, we let
\[
\gamma_X(t):= \sum_{r\ge 0} \left(\int H^r\cdot \csm(\one_X)\right) t^r
\]
where $H$ is the hyperplane class. That is, $\gamma_X(t)$ is obtained from
$\csm(\one_X)=\sum_{r\ge 0} c_r [\Pbb^r]$ by replacing $[\Pbb^r]$ by $t^r$.
Iterating Proposition~\ref{csmhypsec}, we obtain 
that with notation as above,
\[
\csm(\one_{H_1\cap\cdots\cap H_r\cap X}) = \frac{H^r}{(1+H)^r}\cap \csm(\one_X)
\]
in $A_*\Pbb^n$; by \eqref{degcsm},
\[
\chi(H_1\cap\cdots \cap H_r\cap X) = \int \frac{H^r}{(1+H)^r}\cap 
\left(\sum_{\ell\ge 0} c_\ell [\Pbb^\ell]\right)
\]
and hence
\begin{align*}
\chi(t)&=\sum_{r\ge 0} \left(\int \frac{(-H)^r}{(1+H)^r}\cap \csm(\one_X)\right)t^r 
=\sum_{r\ge 0} \left(\int \frac{(-H)^r}{(1+H)^r}\cap \sum_{\ell\ge 0} c_\ell [\Pbb^\ell]\right)t^r \\
&=\sum_{\ell\ge 0} c_\ell \sum_{r\ge 0} \sum_{k\ge 0} \binom{r+k-1}k (-H)^{r+k}\cdot [\Pbb^\ell] t^r \\
&=c_0+t\sum_{\ell\ge 1} c_\ell \sum_{k\ge 0} \binom{\ell-1}k (-1)^\ell t^{\ell-1-k} \\
&=c_0-t\sum_{\ell\ge 1} c_\ell \,(-1)^{\ell-1}\sum_{k\ge 0} \binom{\ell-1}k t^{\ell-1-k} \\
&=c_0-t\sum_{\ell\ge 1} c_\ell \,(-t-1)^{\ell-1} \\
&=\frac{\gamma_X(0) + t\cdot \gamma_X(-t-1)}{t+1}\quad.
\end{align*}
With notation as in \S\ref{intro}, this verifies that $\chi_X=\cI(\gamma_X)$ and 
concludes the proof of Theorem~\ref{main}.

\subsection{}
The fact that the polynomial $\gamma_X(t)$ may be recovered from the collection
of Euler characteristics of general hyperplane sections follows directly from the
good behavior of $\csm$ under general hyperplane sections (verified in
Proposition~\ref{csmhypsec}). If a class has the same behavior, and its degree
equals the Euler characteristic, then the class must agree with the $\csm$ class.
This strategy was used in~\cite{MR96d:14004} to prove a numerical version of
Theorem~\ref{1995thm}; it has also been used recently by Liao in studying the 
relation between the Chern class of the bundle of logarithmic derivations of a 
free divisors and the $\csm$ class of the complement of the divisor (\cite{Liao}).

Other classes have the same behavior under general hyperplane sections. 
For example, for a hypersurface $X$ of a 
nonsingular variety $V$, let $\pi(X)$ denote the {\em Parusi\'nski-Milnor
number\/} of $X$: this is an integer defined for arbitrary hypersurfaces, and
agreeing with the sum of the Milnor numbers at the singularities if these are
all isolated. This invariant is defined and studied in~\cite{MR949831} (over $\Cbb$).
For a hypersurface $X$ of~$\Pbb^n$, we can define the polynomial
\[
\pi_X(t):=\sum_{r\ge 0} (-1)^r \pi(H_1\cap\cdots \cap H_r\cap X)\, t^i\quad,
\]
where $H_1,H_2,\dots$ are general hyperplanes. On the other hand, we can
consider the {\em Milnor class\/} of $X$, defined in~\cite{MR2002g:14005}; 
its push-forward to $\Pbb^n$ is a class $\nu_0 [\Pbb^0] + \nu_1 [\Pbb^1] + \cdots$,
with which we associate the polynomial
\[
\nu_X(t):= \sum_{r\ge 0} \nu_r\, t^r\quad.
\]

\begin{claim}
With notation as above, $\nu_X$ and $\pi_X$ are mapped to each other by
the involution $\cI$. 
\end{claim}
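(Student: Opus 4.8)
The plan is to mimic exactly the strategy used to prove Theorem~\ref{main}, replacing the pair $(\chi_X,\gamma_X)$ by the pair $(\pi_X,\nu_X)$. The key structural fact that made the earlier argument work was twofold: first, that the $\csm$ class transforms under a general hyperplane section by multiplication by $H/(1+H)$ (Proposition~\ref{csmhypsec}); and second, that its degree recovers the Euler characteristic (equation~\eqref{degcsm}). If I can establish the analogous two properties for the Milnor class, then the purely formal computation carried out in~\S\ref{proofmt}---which only used these two inputs---applies verbatim and yields $\pi_X=\cI(\nu_X)$, hence also $\nu_X=\cI(\pi_X)$ since $\cI$ is an involution.

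So the first step is to record that the Milnor class satisfies a general-hyperplane-section formula of the same shape,
\[
\mathcal{M}(X\cap H)=\frac{H}{1+H}\cdot \mathcal{M}(X)
\]
in $A_*\Pbb^n$ for general $H$. The cleanest route is to exploit the definition of the Milnor class as (up to sign) the difference between the $\csm$ class of $X$ and the Fulton--Johnson (or virtual) Chern class of $X$. The $\csm$ side already transforms correctly by Proposition~\ref{csmhypsec}, so what remains is to check that the virtual class, which is built from $c(TV)$, $[X]$, and the Segre class $s(JX,V)$, transforms the same way; and this is governed precisely by the Segre-class identity~\eqref{segreclassrel}, $H\cdot s(JX,\Pbb^n)=s(JX',H)$, that was proved in the course of Proposition~\ref{csmhypsec}. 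Thus both summands defining the Milnor class obey the $H/(1+H)$ rule, and so does their difference.

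The second step is the numerical normalization: I must show that the degree of the Milnor class of a complete hypersurface $X\subseteq\Pbb^n$ equals the Parusi\'nski--Milnor number $\pi(X)$, i.e. $\int\mathcal{M}(X)=\pi(X)$, so that applying the constant map to a point plays the role that~\eqref{degcsm} played before. This is the comparison between the global Milnor class and the global Milnor number established in~\cite{MR2002g:14005}, and it holds over $\Cbb$; I would cite it directly. Granting these two inputs, iterating the section formula gives $\mathcal{M}(H_1\cap\cdots\cap H_r\cap X)=\tfrac{H^r}{(1+H)^r}\cdot\mathcal{M}(X)$, taking degrees yields $\pi(H_1\cap\cdots\cap H_r\cap X)=\int\tfrac{H^r}{(1+H)^r}\cap\mathcal{M}(X)$, and the generating-function manipulation of~\S\ref{proofmt} (with $c_\ell$ now the degree components $\nu_\ell$ of the Milnor class) produces $\pi_X=\cI(\nu_X)$.

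The main obstacle I anticipate is the second step rather than the first: the Parusi\'nski--Milnor number is a priori an analytic invariant defined via vector-field indices, and pinning down that its global version coincides with the degree of the Milnor class---together with checking that everything is stated over $\Cbb$ consistently, since~\cite{MR949831} and~\cite{MR2002g:14005} work complex-analytically---requires care. A secondary subtlety is that the section formula for the virtual class needs $H$ to meet the relevant cones properly and to realize the integral-closure coincidence $s(JX',H)=s(H\cap JX,H)$, but these are exactly the genericity hypotheses already secured inside the proof of Proposition~\ref{csmhypsec}, so they transfer with no new work. Once the normalization $\int\mathcal{M}(X)=\pi(X)$ is in hand, the remainder is the same formal calculation as for Theorem~\ref{main} and carries no independent difficulty.
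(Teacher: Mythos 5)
Your proposal follows the same route as the paper's own (very terse) proof: both rest on exactly two inputs --- the degree of the Milnor class equals the Parusi\'nski--Milnor number (the paper cites \cite{MR2001i:14009}, \S4.1, where you cite \cite{MR2002g:14005}), and the Milnor class obeys the section formula of Proposition~\ref{csmhypsec} because it is, up to sign, the difference between $\csm(\one_X)$ and the virtual class --- followed by the formal computation of \S\ref{proofmt}. Two smaller corrections first: the virtual class is $c_{\mathrm{vir}}(X)=c(T\Pbb^n)\cap\frac{[X]}{1+X}$; it contains no Segre class, and its section formula is immediate from $[X\cap H]=H\cdot[X]$ and $c(TH)=c(T\Pbb^n)/(1+H)$, so the identity~\eqref{segreclassrel} is not what controls it (that identity is the content of Proposition~\ref{csmhypsec}, i.e.\ of the $\csm$ summand, which you can simply quote). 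Also, the step you single out as the main obstacle, $\int\mathcal{M}(X)=\pi(X)$, is a known theorem and is not where the difficulty lies.

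The genuine gap is the sign you are treating as harmless: your two inputs cannot both hold for the same normalization of the Milnor class. The normalization whose degree equals Parusi\'nski's number (the sum of the local Milnor numbers when the singularities are isolated) is forced to be $\mathcal{M}(X)=(-1)^{\dim X}\bigl(c_{\mathrm{vir}}(X)-\csm(\one_X)\bigr)$, and since $\dim(X\cap H)=\dim X-1$ this class satisfies
\[
\mathcal{M}(X\cap H)=-\frac{H}{1+H}\cdot\mathcal{M}(X)\quad,
\]
with a sign that flips at every section; it does \emph{not} behave as prescribed by Proposition~\ref{csmhypsec}. The sign matters. Take $X\subseteq\Pbb^3$ the union of two planes: then $\csm(\one_X)=2[\Pbb^2]+5[\Pbb^1]+4[\Pbb^0]$ and $c_{\mathrm{vir}}(X)=2[\Pbb^2]+4[\Pbb^1]+4[\Pbb^0]$, so $\mathcal{M}(X)=-[\Pbb^1]$ and $\nu_X(t)=-t$; on the other hand $\pi(X)=0$, $\pi(X\cap H)=1$ (one node), $\pi(X\cap H_1\cap H_2)=0$, so $\pi_X(t)=-t$ as well, whereas $\cI(-t)=t$. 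Carrying the correct sign through the computation of \S\ref{proofmt}, what your inputs actually yield is
\[
\cI(\nu_X)(t)=\sum_{r\ge 0}\pi(X_r)\,t^r=\pi_X(-t)\quad,
\]
i.e.\ the involution pairs $\nu_X$ with the generating polynomial of the sectional Milnor numbers taken \emph{without} the alternating sign --- the flip of $\mathcal{M}$ under sections already supplies the alternation. Note that no choice of sign convention for $\mathcal{M}$ rescues the statement with $\pi_X$ as defined: the quadric cone in $\Pbb^3$ (where $c_{\mathrm{vir}}-\csm=[\Pbb^0]$ and $\pi_X(t)=1$) forces one sign, the two planes force the other, and both are surfaces. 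So in filling in the details you should prove the displayed identity (equivalently, drop the factor $(-1)^r$ from the definition of $\pi_X$); this is exactly the point concealed by the paper's ``up to sign \dots\ details are left to the reader'', and your write-up, as it stands, would fail at this step rather than at the degree normalization you worried about.
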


Indeed, the degree of the Milnor class equals the Parusi\'nski-Milnor number
(\cite{MR2001i:14009}, \S4.1); so this follows (as in the proof of Theorem~\ref{main})
from the fact that the Milnor class satisfies the same formula as $\csm$ does with 
respect to general hyperplanes sections. This in turn follows from the fact that, up to 
sign, the Milnor class equals the difference between the Chern-Schwartz-MacPherson
class and the Chern class of the virtual tangent bundle, which also behaves
as prescribed by Proposition~\ref{csmhypsec} with respect to hyperplane sections.
Details are left to the reader.

\begin{example}\label{arrangex}
As an example illustrating Theorem~\ref{main}, we consider a hyperplane
arrangement $\cA$ in $\Pbb^n$. With $\cA$, or more precisely with the corresponding
central arrangement $\hcA$ in $k^{n+1}$, we can associate the {\em characteristic 
polynomial\/} $P_\hcA(t)$ (Definition~2.5.2 in~\cite{MR1217488}). This is one of the 
most important combinatorial invariants of the arrangement; for example, in the case 
of graphical arrangements it recovers the chromatic polynomial of the corresponding 
graph (Theorem~2.88 in \cite{MR1217488}). It immediately follows from the definition 
that $P_\hcA(1)=0$, and we let $\underline P_\hcA(t)$ denote the quotient 
$P_\hcA(t)/(t-1)$. In general, $\underline P_{\cA}(t)$ agrees with the Poincar\'e 
polynomial of the complement $M(\cA)$ of $\cA$ in projective space up to a simple 
coordinate change (Theorem 5.93 in~\cite{MR1217488}).
\end{example}

\begin{corol}\label{hyparr}
With notation as above,
\[
\underline P_{\hcA}(t)=\frac{(t-1) \chi_{M(\cA)}(-t)+\chi_{M(\cA)}(0)}t
\quad\text{and}\quad
\chi_{M(\cA)}(t)=\frac{t\, \underline P_{\hcA}(-t)+\underline P_{\hcA}(1)}{t+1}\quad.
\]
\end{corol}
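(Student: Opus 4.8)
The plan is to reduce both displayed identities to the single relation
\[
\gamma_{M(\cA)}(t)=\underline P_{\hcA}(t+1),
\]
after which each formula becomes a transcription of one of the two equalities in Theorem~\ref{main} under the coordinate shift $t\mapsto t+1$. Granting this relation, Theorem~\ref{main} gives $\chi_{M(\cA)}=\cI(\gamma_{M(\cA)})$, so by the definition of $\cI$ together with the evaluations $\gamma_{M(\cA)}(-t-1)=\underline P_{\hcA}(-t)$ and $\gamma_{M(\cA)}(0)=\underline P_{\hcA}(1)$ one reads off the second formula. For the first formula I would instead use $\underline P_{\hcA}(t)=\gamma_{M(\cA)}(t-1)=\cI(\chi_{M(\cA)})(t-1)$ and substitute $s=t-1$ into $\cI(\chi_{M(\cA)})(s)$, which produces $\frac{(t-1)\chi_{M(\cA)}(-t)+\chi_{M(\cA)}(0)}{t}$. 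Thus the entire content is the computation of $\gamma_{M(\cA)}$.

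To compute it, I would first apply Möbius inversion on the intersection lattice $L(\cA)$. Writing $\mu(X)$ for the value $\mu(\hat 0,X)$ attached to a flat $X$, realized as a projective linear subspace $\Pbb^{d_X}\subseteq\Pbb^n$, the identity $\sum_{\hat 0\le X\le Y}\mu(\hat 0,X)=\delta_{\hat 0,Y}$ yields
\[
\one_{M(\cA)}=\sum_{X\in L(\cA)}\mu(X)\,\one_{\Pbb^{d_X}}.
\]
Additivity of $\csm$ turns this into $\csm(\one_{M(\cA)})=\sum_X\mu(X)\,\csm(\one_{\Pbb^{d_X}})$. Since $\csm(\one_{\Pbb^d})=c(T\Pbb^d)\cap[\Pbb^d]=\sum_i\binom{d+1}{i}[\Pbb^{d-i}]$, the associated polynomial is $\gamma_{\Pbb^d}(t)=\bigl((1+t)^{d+1}-1\bigr)/t$, so
\[
\gamma_{M(\cA)}(t)=\frac1t\sum_X\mu(X)\bigl((1+t)^{d_X+1}-1\bigr)=\frac{P_{\hcA}(1+t)-P_{\hcA}(1)}{t}.
\]
Here I have simply recognized $\sum_X\mu(X)(1+t)^{d_X+1}=P_{\hcA}(1+t)$ and $\sum_X\mu(X)=P_{\hcA}(1)$ from $P_{\hcA}(s)=\sum_X\mu(X)s^{d_X+1}$. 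Using $P_{\hcA}(1)=0$ (the vanishing recorded in Example~\ref{arrangex}) and $P_{\hcA}(1+t)=t\,\underline P_{\hcA}(1+t)$, the constant terms cancel and $\gamma_{M(\cA)}(t)=\underline P_{\hcA}(t+1)$, as wanted.

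Once the inputs are assembled this is a formal manipulation, so I do not expect a serious obstacle; the points requiring care are the orientation conventions in the geometric lattice, so that the Möbius sum returns exactly $\one_{M(\cA)}$, and the codimension shift between the central arrangement $\hcA$ in $k^{n+1}$ and the projective flats $\Pbb^{d_X}$, which is precisely why $P_{\hcA}$ (rather than a projective characteristic polynomial) appears and why the argument tracks the central arrangement throughout. I would emphasize that the whole computation uses only additivity of $\csm$ and the total Chern classes of linear subspaces, hence is valid over any algebraically closed field of characteristic~$0$ exactly as elsewhere in the paper, and that the vanishing $P_{\hcA}(1)=0$ for central arrangements is what makes the final cancellation occur.
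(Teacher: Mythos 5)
Your proposal is correct, and its endgame coincides with the paper's: both reduce the corollary to the single relation $\gamma_{M(\cA)}(t)=\underline P_{\hcA}(t+1)$ and then apply the involution $\cI$ of Theorem~\ref{main} (evaluating $\cI(\chi_{M(\cA)})$ at $t-1$ for the first formula, and $\cI(\gamma_{M(\cA)})$ at $t$ for the second). The difference is in how that relation is obtained: the paper simply cites it as Theorem~3.1 of \cite{hyparr}, whereas you prove it from scratch via M\"obius inversion on the intersection lattice ($\one_{M(\cA)}=\sum_{X\in L(\cA)}\mu(X)\,\one_{\Pbb^{d_X}}$), linearity of $\csm$, and the computation $\gamma_{\Pbb^d}(t)=\bigl((1+t)^{d+1}-1\bigr)/t$, with the final cancellation supplied by $P_{\hcA}(1)=0$. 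Your derivation is sound: the M\"obius identity does return $\one_{M(\cA)}$, since the flats whose projectivization contains a point $p$ form exactly the interval below the smallest flat through $p$; the possible flat $\{0\}$ of an essential central arrangement causes no trouble, as it contributes $\one_\emptyset=0$ on one side and $(1+t)^0-1=0$ on the other; and your dimension bookkeeping $\dim X=d_X+1$ matches the convention under which $P_{\hcA}(s)=\sum_X\mu(X)s^{d_X+1}$. As for what each route buys: the paper's proof is a one-line reduction to a black-box reference, while yours is self-contained within the tools already set up in \S\ref{introCSM}, making explicit that the corollary needs nothing beyond additivity of $\csm$ classes and the Chern class of a linear subspace, and hence holds over any algebraically closed field of characteristic~$0$ --- in effect you have inlined a proof of the cited result rather than assumed it.
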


\begin{proof}
By Theorem~3.1 in~\cite{hyparr}, $\gamma_{M(\cA)}(t) = \underline P_{\hcA}(t+1)$.
Applying the involution~$\cI$ gives the first formula. The second formula is 
equivalent to the first.
\end{proof}

For example, consider the arrangement $\cA$ in $\Pbb^2$ consisting of three coincident
lines and of a line not containing the point of intersection:
\begin{center}
\includegraphics[scale=.6]{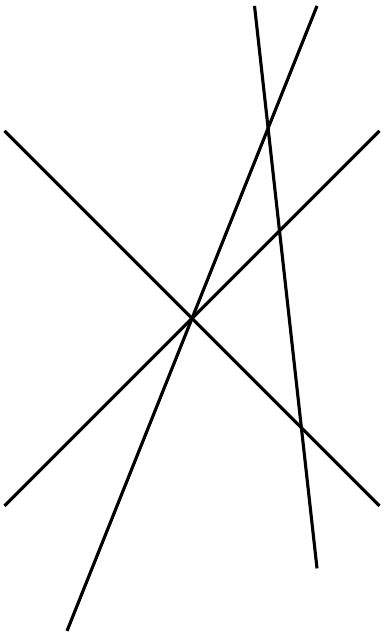}
\end{center}
It is immediately verified that $\chi_0(M(\cA))=0$, $\chi_1(M(\cA))=-2$, 
$\chi_2(M(\cA))=1$. (For instance, the intersection of $M(\cA)$ with a general
line consists of the complement of $4$ points in $\Pbb^1$, with Euler characteristic
$-2$.) Therefore $\chi_{M(\cA)}(t)=2t+t^2$, and hence
\[
\underline P_{\hcA}(t)=\frac{(t-1) (-2t+t^2)+0}{t}=t^2-3t+2\quad.
\]

Note that $\csm$ classes do not appear directly in the statement of 
Corollary~\ref{hyparr}, but they streamline the proof considerably, 
via~Theorem~\ref{main}.

J.~Huh has recently proved that the coefficients of $P_{\hcA}(t)$ form a 
{\em log-concave} sequence, settling long-standing conjectures of Read, Rota, 
Heron, and Welsh (\cite{Huh}). The relation of $P_{\hcA}(t)$ with information 
equivalent to the $\csm$ class of the complement (the {\em polar degrees,\/} 
cf.~\S\ref{DPH}, particularly~Remark~\ref{gchar}) plays an important r\^ole in 
Huh's work.
\qede


\section{Dimca-Papadima/Huh formulae}\label{DPH}

\subsection{Polar degrees}\label{polardegs}
The application sketched in \S\ref{introDPH} depends on the following result
from \cite{MR1956868}.

Let $X$ be a hypersurface of $\Pbb^n$, defined by a homogeneous polynomial
$F(x_0,\dots,x_n)$. Consider the rational map $\Pbb^n\dashrightarrow \Pbb^n$
defined by the partial derivatives of $F$:
\[
p \mapsto \left(\frac{\partial F}{\partial x_0}(p)\colon \cdots \colon 
\frac{\partial F}{\partial x_n}(p)\right)\quad.
\]
This is the {\em polar\/} map of \cite{MR1786486}, called the {\em gradient\/}
map in \cite{MR2018927}. We denote by $\Gamma$ the graph of this map.
The class of $\Gamma$ in $\Pbb^n\times \Pbb^n$ determines integers $g_i$
such that
\[
[\Gamma] = g_0 k^n + g_1 h k^{n-1} + \cdots + g_n h^n
\]
where $h$, resp., $k$ is the pull-back of the hyperplane class from the first,
resp., second factor. The number $g_i$ are the {\em projective degrees\/}
of the polar map (\cite{MR1182558}, Example~19.4): $g_i$ is the degree
of the restriction of the polar map to a general $\Pbb^i$ in $\Pbb^n$.
We will call $g_i$ the {\em $i$-th polar degree\/} of $F$. In particular, the 
$n$-th degree $g_n$ equals the degree of the polar map itself. By definition, 
the polynomial~$F$ is {\em homaloidal\/} if $g_n=1$, i.e., if the polar map 
is birational (\cite{MR1786486}).

All the considerations in this section will be consequences of Theorem~\ref{main}
and the following result.

\begin{theorem}[\cite{MR1956868}, Theorem 2.1]\label{cccps}
With notation as above,
\begin{equation}\label{thm02}
\csm(\one_X) = (1+h)^{n+1} - \sum_{j=0}^n g_j\cdot (-h)^j (1+h)^{n-j}\quad.
\end{equation}
\end{theorem}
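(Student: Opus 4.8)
The plan is to reduce the identity to a Segre-class computation, by combining the blow-up description of the graph $\Gamma$ with the formula for $\csm(\one_X)$ recalled in Theorem~\ref{1995thm}. The degrees $g_j$ are intersection numbers on $\Gamma$, whereas Theorem~\ref{1995thm} expresses $\csm(\one_X)$ through $s(JX,\Pbb^n)$; the whole content of the statement is to match these two packages of data against each other.

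First I would identify $\Gamma$ with a blow-up. By the Euler identity $\sum_i x_i\,\partial F/\partial x_i = d\cdot F$ (with $d=\deg F$), in characteristic zero the ideal $(\partial F/\partial x_0,\dots,\partial F/\partial x_n)$ coincides with the ideal of the singularity subscheme $JX$, so the base scheme of the polar system $\subseteq|\cO(d-1)|$ is exactly $JX$. Hence $\Gamma$ is the blow-up $\pi\colon\Til{\Pbb^n}:=B\ell_{JX}\Pbb^n\to\Pbb^n$ with exceptional divisor $E$. Writing $H=\pi^*h$ and letting $\mu$ be the pull-back of the target hyperplane class $k$, the resolved polar map is given by $\pi^*\cO(d-1)\otimes\cO(-E)$, so $\mu=(d-1)H-E$. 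Under $\Gamma\cong\Til{\Pbb^n}$ the classes $h,k$ restrict to $H,\mu$, and comparison with $[\Gamma]=\sum_j g_j h^j k^{n-j}$ together with the projection formula and $\int H^n=1$, $\int H^{n+1}=0$ yields the clean relation
\[
\pi_*(\mu^j)=g_j\,h^j\qquad(0\le j\le n).
\]

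Next I would feed this into the Segre class. By Fulton's formula, $s(JX,\Pbb^n)=\pi_*\!\bigl(E/(1+E)\bigr)=\sum_{i\ge1}(-1)^{i-1}\pi_*(E^i)$. Substituting $E=(d-1)H-\mu$, expanding, and pushing forward term by term via $\pi_*(\mu^j)=g_jh^j$ and the projection formula, I obtain $s(JX,\Pbb^n)$ as an explicit polynomial in $h$ whose coefficients are signed combinations of the $g_j$ and powers of $(d-1)$. This is the step that trades the projective degrees for the Segre coefficients.

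Finally I would substitute this expression into Theorem~\ref{1995thm}, using $c(T\Pbb^n)=(1+h)^{n+1}$, $[X]=dh$, $\cO(X)=\cO(d)$, and $1+X=1+dh$, and carefully applying the operations $(-)^\vee$ and $\otimes\cO(X)$ according to the identities in \S\ref{csmhyp}. The main obstacle is this last simplification. The Segre class $s(JX,\Pbb^n)$ depends only on the scheme $JX$ and carries no explicit $d$, whereas the passage to the $g_j$ introduced the twist by $(d-1)$ (through $\mu=(d-1)H-E$), and Theorem~\ref{1995thm} itself carries explicit factors $\cO(d)$ and $1/(1+dh)$. One must verify that, after collecting the $\otimes$- and $\vee$-operations, the twist by $\cO(d-1)$ recombines exactly with these factors so that all the \emph{explicit} powers of $d$ collapse, leaving the degree-free shape $(1+h)^{n+1}-\sum_j g_j(-h)^j(1+h)^{n-j}$ (the remaining $d$-dependence being packaged entirely inside the $g_j$, as it must be since already $g_1=d-1$). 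I would keep this transparent by checking the whole identity upstairs on $\Til{\Pbb^n}$, where $H$, $E$, and $\mu$ appear on an equal footing, and pushing forward only at the very end.
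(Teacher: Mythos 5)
Your proposal is correct and follows essentially the same route as the paper, which sketches exactly this argument (deferring details to \cite{MR1956868}): identify $\Gamma$ with $B\ell_{JX}\Pbb^n$ (your Euler-identity step justifying that the base scheme is $JX$), translate $[\Gamma]$ into the Segre class $s(JX,\Pbb^n)$ via $\pi_*(\mu^j)=g_j h^j$ and $s(JX,\Pbb^n)=\pi_*\bigl(E/(1+E)\bigr)$, and substitute into Theorem~\ref{1995thm}. The bookkeeping of the twists by $\cO(d-1)$ and $\cO(d)$ that you flag as the main obstacle is precisely the ``notational translation'' carried out in \cite{MR1956868}, and it does collapse as you predict.
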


The point is that $\Gamma$ may be identified with the blow-up of $\Pbb^n$
along the scheme defined by the ideal generated by the partial derivatives
of $F$, that is, the singularity subscheme $JX$ of $X$. It is then clear that
the class of $\Gamma$ carries essentially the same information as the
(push-forward of the) Segre class of $JX$ in $\Pbb^n$. Performing this
notational translation in the formula given in Theorem~\ref{1995thm}
yields Theorem~\ref{cccps}. Further details may be found in~\cite{MR1956868}.

The following is an immediate consequence of \eqref{thm02}:

\begin{corol}\label{doldim}
The polar degrees depend only on the reduced polynomial
$F_\text{red}$ associated with $F$. In particular, $F$ is homaloidal if 
and only if $F_{\text{red}}$ is homaloidal.
\end{corol}

Indeed, by means of Theorem~\ref{cccps} all the $g_j$'s are determined
by $\csm(\one_X)=\csm(\one_{X_{\text{red}}})$
(cf.~Remark~\ref{csmred}). 

As mentioned in the introduction, the last part of this statement had been conjectured 
by Dolgachev (\cite{MR1786486}, end of \S3). A topological proof (over $\Cbb$) 
was first given in~\cite{MR1946153}; several other proofs have appeared in 
the meanwhile.
The argument given above shows that the fact holds over any algebraically
closed field of characteristic zero, and is straightforward modulo the 
result from~\cite{MR2001i:14009} recalled in \S\ref{csmhyp}.

\subsection{Formularium}\label{formulae}
As mentioned above, the $g_i$ are the projective degrees of the polar map.
The term $(1+h)^{n+1}$ in \eqref{thm02} is the Chern class of $\Pbb^n$,
so Theorem~\ref{cccps} may be reformulated as
\begin{equation}\label{compleq}
\csm(\one_{\Pbb^n\smallsetminus X}) = \sum_{j=0}^n g_j\cdot (-h)^j (1+h)^{n-j}\quad.
\end{equation}
Using the notation introduced in \S\ref{proofmt} and applying Theorem~\ref{main} 
we get:

\begin{corol}\label{gachig}
Let $D(F)$ be the complement $\Pbb^n\smallsetminus X$. Then
\begin{itemize}
\item $\gamma_{D(F)}(t) = \sum_{j=0}^n g_j\cdot (-1)^j (t+1)^{n-j}$. 
\item $\chi_{D(F)}(t) = (-1)^n \sum_{j=0}^n g_j\cdot \dfrac{t^{n-j+1}-(-1)^{n-j+1}}{t+1}$.
\end{itemize}
\end{corol}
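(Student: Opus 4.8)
The plan is to read both formulas straight off equation \eqref{compleq}, which already presents $\csm(\one_{D(F)})$ as an explicit polynomial in the hyperplane class $h$, and then to pass to $\chi_{D(F)}$ by means of the involution supplied by Theorem~\ref{main}. First I would make precise the passage from a $\csm$ class to its generating polynomial $\gamma$. Since $h^k\cap[\Pbb^n]=[\Pbb^{n-k}]$ in $A_*\Pbb^n$, a class written as $\sum_k a_k h^k$ is the same as $\sum_r a_{n-r}[\Pbb^r]$, so the prescription ``replace $[\Pbb^r]$ by $t^r$'' becomes the substitution $\sum_k a_k h^k\mapsto \sum_k a_k t^{\,n-k}$. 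Equivalently, $\gamma_{D(F)}(t)=t^n\cdot P(1/t)$, where $P(h)$ denotes the right-hand side of \eqref{compleq} read as a polynomial in $h$.

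I would then apply this directly to $P(h)=\sum_{j=0}^n g_j(-h)^j(1+h)^{n-j}$. The powers of $t$ bookkeep cleanly: $(-1/t)^j(1+1/t)^{n-j}=(-1)^j t^{-j}(t+1)^{n-j}t^{-(n-j)}=(-1)^j(t+1)^{n-j}t^{-n}$, so multiplying through by $t^n$ cancels the denominators exactly and leaves $\gamma_{D(F)}(t)=\sum_{j=0}^n g_j(-1)^j(t+1)^{n-j}$, which is the first formula. For the second I would invoke $\chi_{D(F)}=\cI(\gamma_{D(F)})$ and evaluate
\[
\cI(\gamma_{D(F)})(t)=\frac{t\cdot\gamma_{D(F)}(-t-1)+\gamma_{D(F)}(0)}{t+1}.
\]
Substituting $t\mapsto -t-1$ into the first formula collapses the binomials, since $(-t-1)+1=-t$, giving $\gamma_{D(F)}(-t-1)=\sum_j g_j(-1)^j(-t)^{n-j}=(-1)^n\sum_j g_j t^{\,n-j}$, while $\gamma_{D(F)}(0)=\sum_j g_j(-1)^j$. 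Assembling these, factoring $(-1)^n$ to the front, and using the sign identity $(-1)^{j-n}=-(-1)^{n-j+1}$ to recombine the constant terms, I obtain $\chi_{D(F)}(t)=(-1)^n\sum_{j=0}^n g_j\,\dfrac{t^{\,n-j+1}-(-1)^{n-j+1}}{t+1}$, as claimed.

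The only place demanding genuine care is the sign-and-degree bookkeeping in the dictionary $[\Pbb^r]\leftrightarrow h^{\,n-r}$ and in that final sign identity; everything else is mechanical substitution into the closed form of $\cI$. It is also worth noting explicitly that each summand $\dfrac{t^{\,n-j+1}-(-1)^{n-j+1}}{t+1}$ is honestly a polynomial, since $t=-1$ is a root of the numerator, so the stated identity is a polynomial identity rather than merely a rational one.
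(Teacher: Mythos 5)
Your proposal is correct and follows exactly the paper's own route: the first formula is obtained from \eqref{compleq} via the dictionary $h\mapsto 1/t$ followed by multiplication by $t^n$, and the second by applying the involution $\cI$ of Theorem~\ref{main}. You simply carry out the sign bookkeeping (which the paper leaves as ``follows immediately'') in explicit detail, and your verification that each summand is genuinely a polynomial is a correct, if optional, added check.
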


\begin{proof}
In \eqref{compleq}, $h^i$ stands for the class $h^i\cdot [\Pbb^n]=[\Pbb^{n-i}]$,
which is replaced by $t^{n-i}$ in the polynomial $\gamma_{D(F)}(t)$. This change
may be performed by replacing $h$ by $1/t$ and multiplying through by $t^n$,
yielding the first formula. The second follows immediately by applying the
involution $\cI$, as prescribed by Theorem~\ref{main}.
\end{proof}

We can assemble the $g_j$'s into yet another polynomial determined by $X$:
\[
g_X(t):= \sum_{j=0}^n g_j\, t^{n-j}\quad,
\]
and Corollary~\ref{gachig} may then be reformulated as
\begin{itemize}
\item $\gamma_{D(F)}(t) = (-1)^n\, g_X(-t-1)$. 
\item $\chi_{D(F)}(t) = (-1)^n\, \dfrac{t\cdot g_X(t)+g_X(-1)}{t+1}$.
\end{itemize}
`Solving for $g$' in these two formulas gives

\begin{corol}\label{gfromgamma}
With notation as above,
\begin{equation}\label{gfromchi}
g_X(t) = (-1)^n\, \gamma_{D(F)}(-t-1) 
= (-1)^n\, \frac{(t+1)\cdot \chi_{D(f)}(t) - \chi_{D(f)}(0)}t\quad.
\end{equation}
\end{corol}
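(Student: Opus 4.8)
The plan is to obtain the two displayed identities in Corollary~\ref{gfromgamma} by algebraically inverting the two reformulated identities stated just above it, namely $\gamma_{D(F)}(t) = (-1)^n\, g_X(-t-1)$ and $\chi_{D(F)}(t) = (-1)^n\,\frac{t\cdot g_X(t)+g_X(-1)}{t+1}$. These two identities are precisely the restatements of Corollary~\ref{gachig} in terms of the polynomial $g_X(t)$, so I may take them as given. Each of the two claimed expressions for $g_X(t)$ will come from one of these relations.

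First I would handle the first equality, $g_X(t) = (-1)^n\,\gamma_{D(F)}(-t-1)$. Starting from $\gamma_{D(F)}(t) = (-1)^n\, g_X(-t-1)$, I substitute $t \mapsto -t-1$ to get $\gamma_{D(F)}(-t-1) = (-1)^n\, g_X(-(-t-1)-1) = (-1)^n\, g_X(t)$, using that $-(-t-1)-1 = t$. Multiplying through by $(-1)^n$ and using $((-1)^n)^2 = 1$ immediately yields $g_X(t) = (-1)^n\,\gamma_{D(F)}(-t-1)$. This is the direct inversion of the substitution $t\mapsto -t-1$, which is self-inverse (it is exactly the reflection about $t=-1/2$ underlying the involution $\cI$), so no real obstacle arises here.

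Next I would derive the second equality, $g_X(t) = (-1)^n\,\frac{(t+1)\cdot \chi_{D(F)}(t)-\chi_{D(F)}(0)}{t}$, from $\chi_{D(F)}(t) = (-1)^n\,\frac{t\cdot g_X(t)+g_X(-1)}{t+1}$. Multiplying the latter by $(t+1)$ and by $(-1)^n$ gives $(-1)^n(t+1)\chi_{D(F)}(t) = t\cdot g_X(t)+g_X(-1)$. To eliminate the unknown constant $g_X(-1)$, I evaluate this same relation at $t=0$: since the left side becomes $(-1)^n\chi_{D(F)}(0)$ and the right side becomes $g_X(-1)$ (the $t\cdot g_X(t)$ term vanishes), I obtain $g_X(-1) = (-1)^n\chi_{D(F)}(0)$. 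Substituting this back and solving for $g_X(t)$ gives $t\cdot g_X(t) = (-1)^n\big((t+1)\chi_{D(F)}(t)-\chi_{D(F)}(0)\big)$, hence the claimed formula after dividing by $t$.

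The only point requiring a word of justification is the final division by $t$: I must confirm that $t$ divides the numerator $(t+1)\chi_{D(F)}(t)-\chi_{D(F)}(0)$ as polynomials, i.e.\ that the numerator vanishes at $t=0$. This is exactly the identity $g_X(-1)=(-1)^n\chi_{D(F)}(0)$ established above, which guarantees the numerator has no constant term, so the quotient is a genuine polynomial. This compatibility is the main (and only mild) obstacle, and it is resolved automatically by the evaluation at $t=0$; everything else is routine symbolic manipulation. Since the two derived expressions are forced uniquely by the starting relations, they must agree, which also re-proves the consistency of Corollary~\ref{gachig} and completes the proof.
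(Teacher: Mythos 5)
Your proposal is correct and takes essentially the same route as the paper: the paper's entire proof is the remark that ``solving for $g$'' in the two reformulated identities $\gamma_{D(F)}(t)=(-1)^n g_X(-t-1)$ and $\chi_{D(F)}(t)=(-1)^n\frac{t\, g_X(t)+g_X(-1)}{t+1}$ yields the corollary, which is exactly the algebraic inversion you perform. Your added care about the self-inverse substitution $t\mapsto -t-1$ and the divisibility by $t$ (via $g_X(-1)=(-1)^n\chi_{D(F)}(0)$) simply makes the paper's implicit manipulation explicit.
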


\begin{remark}\label{gchar}
If $X$ is a hyperplane arrangement, Corollary~\ref{gfromgamma}
shows that $g_X(t)$ agrees with $\underline P_{\hcA}(-t)$ up to a sign
(cf.~Example~\ref{arrangex}).
This is observed (at least over $\Cbb$) in Corollary~25 of \cite{Huh}.

In general, the first formula shows that $\gamma_{D(f)}(t)$ and $g_X(t)$ are 
also related by an involution. 
\qede\end{remark}

In particular, \eqref{gfromchi} computes the Euler characteristic
of the complement $D(F)$ as
\[
\chi(D(F))=\gamma_{D(F)}(0)=(-1)^n g_X(-1) = g_0 -g_1+g_2 - \cdots \pm g_n
\quad.
\]
Further, the polar degree $g_n$ equals
\[
g_X(0)= (-1)^n \gamma_{D(f)}(-1)=(-1)^n \left( \chi_{D(f)}(0)+\chi_{D(f)}'(0)
\right)\quad,
\]
as was mentioned in~\S\ref{introDPH}. That is,
\begin{corol}[Dimca-Papadima]
The degree of the polar map determined by the homogeneous polynomial $F$
equals
\[
(-1)^n \left( \chi(D(F))-\chi(H\cap D(F))\right)
\]
where $H$ is a general hyperplane.
\end{corol}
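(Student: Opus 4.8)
The plan is to assemble the corollary directly from the two tools already in place: the dictionary between polar degrees and the Chern-Schwartz-MacPherson class (Theorem~\ref{cccps}), and the involution $\cI$ of Theorem~\ref{main} relating $\chi_{D(F)}(t)$ and $\gamma_{D(F)}(t)$. The degree of the polar map is by definition the top projective degree $g_n$, which is the value $g_X(0)$ of the polynomial $g_X(t)=\sum_{j=0}^n g_j\,t^{n-j}$, since only the $j=n$ term survives at $t=0$. So the whole task is to evaluate $g_X$ at the origin in terms of Euler characteristics.

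First I would invoke the first formula of Corollary~\ref{gfromgamma}, namely $g_X(t)=(-1)^n\,\gamma_{D(F)}(-t-1)$, and evaluate it at $t=0$ to obtain $g_n=g_X(0)=(-1)^n\,\gamma_{D(F)}(-1)$. This reduces the problem to computing $\gamma_{D(F)}(-1)$. Next I would apply Theorem~\ref{main}, which gives $\gamma_{D(F)}=\cI(\chi_{D(F)})$, together with the elementary identity $\cI(p)(-1)=p(0)+p'(0)$ recorded in \S\ref{introsu}; this yields $\gamma_{D(F)}(-1)=\chi_{D(F)}(0)+\chi'_{D(F)}(0)$.

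It then remains only to read off the constant and linear coefficients of $\chi_{D(F)}(t)=\sum_{r\ge0}(-1)^r\chi(H_1\cap\cdots\cap H_r\cap D(F))\,t^r$. The constant term is $\chi_{D(F)}(0)=\chi(D(F))$, and the derivative at the origin is the coefficient of $t$, namely $\chi'_{D(F)}(0)=-\chi(H\cap D(F))$ for a general hyperplane $H$. Substituting gives $g_n=(-1)^n\bigl(\chi(D(F))-\chi(H\cap D(F))\bigr)$, as claimed.

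The derivation is short because all the substantive work has been front-loaded: the genuinely nontrivial inputs are Theorem~\ref{cccps} (the $\csm$/polar-degree dictionary, itself a repackaging of Theorem~\ref{1995thm}) and Theorem~\ref{main}. Consequently I do not expect any real obstacle here beyond bookkeeping; the only point demanding care is keeping the $(-1)^n$ and the sign on the linear coefficient of $\chi_{D(F)}$ consistent, so that the sign-reversing symmetry about $t=-1/2$ built into $\cI$ is applied with the correct orientation.
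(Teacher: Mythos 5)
Your proposal is correct and follows exactly the paper's own route: the paper likewise obtains the corollary by evaluating $g_X(t)=(-1)^n\,\gamma_{D(F)}(-t-1)$ at $t=0$ and then using $\cI(p)(-1)=p(0)+p'(0)$ from Theorem~\ref{main} to write $\gamma_{D(F)}(-1)=\chi_{D(F)}(0)+\chi'_{D(F)}(0)=\chi(D(F))-\chi(H\cap D(F))$. The sign bookkeeping in your last step is also exactly as in the paper, so there is nothing to add.
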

Over $\Cbb$, this formula is given in \cite{MR2018927}, Theorem~1. The
argument presented above proves it over any algebraically closed field of
characteristic~$0$, where we adopt the definition of Euler characteristic $\chi$
recalled in~\S\ref{introEc}. Tracing the argument shows that
the precise requirement on $H$ is that it should intersect properly the 
supports of the normal cone of $JX$ (cf.~the proof of 
Proposition~\ref{csmhypsec}). This should be viewed as an algebro-geometric 
analog of the condition specified in the paragraph following the statement
of Theorem~1 in~\cite{MR2018927}.

More generally:
\begin{corol}[Huh]\label{Huhf}
For $j=0,\dots,n$:
\[
g_j=(-1)^j\, \chi(D(F)\cap (L_j\smallsetminus L_{j-1}))
\]
where $L_j$ is a general linear subspace of dimension~$j$ in $\Pbb^n$.
\end{corol}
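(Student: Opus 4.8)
The plan is to read the coefficients $g_j$ directly off the second expression for $g_X(t)$ in Corollary~\ref{gfromgamma}, which writes $g_X$ in terms of the Euler polynomial $\chi_{D(F)}$, and then to recognize the resulting difference of Euler characteristics as the Euler characteristic of $D(F)$ restricted to a ``shell'' $L_j\smallsetminus L_{j-1}$. The genuine mathematical content has already been established (Theorem~\ref{main}, incarnated in Corollary~\ref{gfromgamma}); what remains is careful bookkeeping of signs and indices together with one application of inclusion-exclusion for $\chi$.

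First I would fix notation, writing $\chi_r:=\chi(H_1\cap\cdots\cap H_r\cap D(F))$ for the Euler characteristic of the section of $D(F)$ by $r$ general hyperplanes, so that $\chi_{D(F)}(t)=\sum_{r\ge 0}(-1)^r\chi_r\,t^r$ and $\chi_{D(F)}(0)=\chi_0$. Since $L_{n-r}:=H_1\cap\cdots\cap H_r$ is a general linear subspace of dimension $n-r$, we have $\chi_r=\chi(D(F)\cap L_{n-r})$. Substituting this expansion into the formula $g_X(t)=(-1)^n\bigl((t+1)\chi_{D(F)}(t)-\chi_{D(F)}(0)\bigr)/t$ of Corollary~\ref{gfromgamma}, a one-line computation shows that the numerator $(t+1)\chi_{D(F)}(t)-\chi_0$ has vanishing constant term and is therefore divisible by $t$; carrying out the division yields
\[
g_X(t)=(-1)^n\sum_{r\ge 0}(-1)^r\,(\chi_r-\chi_{r+1})\,t^r\quad.
\]

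Next I would extract coefficients. Since $g_X(t)=\sum_{j=0}^n g_j\,t^{n-j}$, the coefficient of $t^{n-j}$ is $g_j$; comparing with the display above at $r=n-j$ and using $(-1)^n(-1)^{n-j}=(-1)^j$ gives $g_j=(-1)^j(\chi_{n-j}-\chi_{n-j+1})$. Finally I would interpret the two Euler characteristics geometrically: $\chi_{n-j}=\chi(D(F)\cap L_j)$ and $\chi_{n-j+1}=\chi(D(F)\cap L_{j-1})$, where, crucially, the successive-general-hyperplane definition of $\chi_{D(F)}(t)$ furnishes these two subspaces as a \emph{nested} flag $L_{j-1}=L_j\cap H_{n-j+1}\subset L_j$ of general subspaces. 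Inclusion-exclusion for $\chi$ then gives $\chi(D(F)\cap(L_j\smallsetminus L_{j-1}))=\chi(D(F)\cap L_j)-\chi(D(F)\cap L_{j-1})=\chi_{n-j}-\chi_{n-j+1}$, whence $g_j=(-1)^j\,\chi(D(F)\cap(L_j\smallsetminus L_{j-1}))$, as claimed.

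The one step deserving attention is not any hard estimate but the observation that the same flag of general hyperplanes computing $\chi_{n-j}$ and $\chi_{n-j+1}$ automatically produces the nested pair $L_{j-1}\subset L_j$ needed for the set-difference; this is precisely why the answer packages as a single Euler characteristic of $D(F)\cap(L_j\smallsetminus L_{j-1})$ rather than merely as a difference. Everything else is the formal manipulation of the generating polynomials already in hand.
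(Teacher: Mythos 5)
Your proposal is correct and is exactly the paper's argument: the paper's entire proof is the parenthetical ``Just read off the coefficient of $t^{n-j}$ in \eqref{gfromchi},'' and your computation—expanding $\chi_{D(F)}(t)$, dividing $(t+1)\chi_{D(F)}(t)-\chi_{D(F)}(0)$ by $t$, extracting $g_j=(-1)^j(\chi_{n-j}-\chi_{n-j+1})$, and using additivity of $\chi$ on the nested flag $L_{j-1}\subset L_j$—is precisely the bookkeeping that one-liner leaves implicit. Your explicit remark that the successive general hyperplanes automatically furnish the nested pair needed for the set-difference is a worthwhile clarification, but it does not constitute a different route.
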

\noindent (Just read off the coefficient of $t^{n-j}$ in \eqref{gfromchi}.)

Over $\Cbb$, this formula is given in Theorem~9 in~\cite{Huh}, where it is
obtained from a description of the homotopy type of the general
linear sections of $D(F)$. 

\begin{remark}
The connection between $\csm$ classes and the polar degrees $g_j$ is 
mentioned explicitly in~\cite{Huh}. Our only contribution here amounts to the
observation that this connection alone suffices for the formula in 
Corollary~\ref{Huhf}, modulo the rather simple-minded Theorem~\ref{main}.
This has the very minor advantage of providing a `non-topological' interpretation
of the formula, which is then
shown to hold over any algebraically closed field of characteristic~$0$.
\qede\end{remark}

\subsection{Higher codimension}
It is natural to ask whether formulas analogous to those reviewed in the
previous section may be given for higher codimensional subschemes 
in~$\Pbb^n$. It is not clear {\em a priori\/} what should play the role of
the `polar degrees' $g_j$ defined in \S\ref{polardegs}. Maybe the most
surprising aspect here is that one {\em can\/} in fact define these degrees
in complete generality.

Let $S\subseteq \Pbb^n$ be any subscheme, and let $F_1,\dots, F_r$
be nonzero homogeneous generators for any ideal $I$ defining $X$.
Denote by
$g_i^{(F)}$ the $i$-th polar degree of the homogeneous polynomial $F$,
defined as in \S\ref{polardegs}.

\begin{defin}\label{polardegsdef}
We define the {\em $i$-th polar degree of $S$\/} to be
\[
g_i^S:=\sum_{\emptyset\neq J\subseteq \{1,\dots,r\}}
(-1)^{|J|+1} g_i^{(\prod_{j\in J} F_j)}
\]
where $F_1,\dots, F_r$ are any collection of homogeneous polynomials
generating an ideal $I$ defining~$S$.
\qede\end{defin}

It is not obvious (to us) that these degrees are well-defined, that is, that they
do not depend on the choice of the generators of the ideal $I$. We will see that
they are, and that in fact they only depend on the support $S_\text{red}$ of
the scheme defined by~$I$. Thus, the numbers $g_i^S$ do not change if
$I$ is replaced by $\sqrt I$ or by the saturation of $I$. This fact generalizes
Dolgachev's conjecture to arbitrary subschemes of $\Pbb^n$, over any
algebraically closed field of characteristic~$0$.

\begin{example}\label{tcex}
The ideal of a twisted cubic $C\subseteq \Pbb^3$ is generated by the quadratic
polynomials $F_1=x_0 x_3-x_1 x_2$, $F_2=x_0 x_2-x_1^2$, $F_3=x_1 x_3-x_2^2$.
The polar map of $F_1$ is given in homogeneous coordinates by
$(x_3:-x_2:-x_1:x_0)$, giving $g^{(F_1)}_0 = \cdots = g^{(F_1)}_3 = 1$.
Both $F_2$ and $F_3$ are cones over smooth conics, and this gives easily
$g^{(F_i)}_0 = \cdots = g^{(F_i)}_2 = 1$, $g^{(F_i)}_3 = 0$ for $i=1,2$.
An explicit computation (which may be performed with e.g., Macaulay2
\cite{M2}) shows that 
\[
g^{(F_i F_j)}_0 = 1 ,\quad
g^{(F_i F_j)}_1 = 3 ,\quad
g^{(F_i F_j)}_2 = 5 ,\quad
g^{(F_i F_j)}_3 = 3 
\]
for $i\ne j$, and
\[
g^{(F_1 F_2 F_3)}_0 =  1,\quad
g^{(F_1 F_2 F_3)}_1 =  5,\quad
g^{(F_1 F_2 F_3)}_2 =  10,\quad
g^{(F_1 F_2 F_3)}_3 =  6\quad.
\]
It follows that
\[
g_0^{C}=1,\quad
g_1^{C}=-1,\quad
g_2^{C}=-2,\quad
g_3^{C}=-2\quad.
\]
(For example, $g_3^C=1+0+0-3-3-3+6=-2$.)

On the other hand, the twisted cubic $C$ is also the set-theoretic intersection 
of the quadric $F=x_0 x_2-x_1^2=0$ and the cubic $G=x_2 (x_1 x_3-x_2^2)
-x_3 (x_0 x_3-x_1 x_2)=0$. An explicit computation (again performed with
Macaulay2) gives
\begin{gather*}
g^{(F)}_0 = 1 ,\quad
g^{(F)}_1 = 1 ,\quad
g^{(F)}_2 = 1 ,\quad
g^{(F)}_3 = 0 \\
g^{(G)}_0 = 1 ,\quad
g^{(G)}_1 = 2 ,\quad
g^{(G)}_2 = 3 ,\quad
g^{(G)}_3 = 1
\end{gather*}
(so that $G$ is homaloidal; this plays no r\^ole here) and
\[
g^{(FG)}_0 = 1 ,\quad
g^{(FG)}_1 = 4 ,\quad
g^{(FG)}_2 = 6 ,\quad
g^{(FG)}_3 = 3 \quad.
\]
This gives
\[
g_0^{C}=1+1-1=1,\quad
g_1^{C}=1+2-4=-1,\quad
g_2^{C}=1+3-6=-2,\quad
g_3^{C}=0+1-3=-2\,,
\]
with the same result from the ideal for a different scheme structure, as promised.
\qede\end{example}

To prove that the polar degrees of a subscheme $S$ are well-defined, and in 
fact only depend on $S_\text{red}$, it suffices to observe that they are related
with the polynomials $\gamma_{\Pbb^n\smallsetminus S}$, $\chi_{\Pbb^n
\smallsetminus S}$ by {\em the same formulas as
in the hypersurface case.\/} As in the hypersurface case (but now for arbitrary 
subschemes $S\subseteq \Pbb^n$) we define
\[
g_S(t):= \sum_{i=0}^n g^S_i\, t^{n-i}\quad.
\]

\begin{theorem}\label{arbcod}
With notation as above,
\[
g_S(t) = (-1)^n\, \gamma_{\Pbb^n\smallsetminus S}(-t-1) 
= (-1)^n\, \frac{(t+1)\cdot \chi_{\Pbb^n\smallsetminus S}(t) 
- \chi_{\Pbb^n\smallsetminus S}(0)}t\quad.
\]
\end{theorem}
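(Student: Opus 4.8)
The plan is to reduce the statement to the hypersurface case (Corollary~\ref{gfromgamma}) by a single application of inclusion-exclusion. First note that the equality of the two right-hand expressions is purely formal: it is obtained by applying the involution $\cI$ and substituting $t\mapsto -t-1$, exactly as in Corollary~\ref{gfromgamma}, and rests only on Theorem~\ref{main}. So the whole content lies in the first equality, $g_S(t)=(-1)^n\,\gamma_{\Pbb^n\smallsetminus S}(-t-1)$.

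To prove it, I would set $U_j:=D(F_j)=\Pbb^n\smallsetminus\{F_j=0\}$. For every nonempty $J\subseteq\{1,\dots,r\}$ a point lies in every $U_j$, $j\in J$, precisely when $\prod_{j\in J}F_j$ does not vanish there, so $\bigcap_{j\in J}U_j=D(\prod_{j\in J}F_j)$; and $\bigcup_{j=1}^r U_j=\Pbb^n\smallsetminus\{F_1=\cdots=F_r=0\}=\Pbb^n\smallsetminus S$ as sets, since the complement only sees the support $S_\text{red}$ of the ideal $I=(F_1,\dots,F_r)$. The elementary inclusion-exclusion identity for indicator functions then reads
\[
\one_{\Pbb^n\smallsetminus S}=\sum_{\emptyset\neq J\subseteq\{1,\dots,r\}}(-1)^{|J|+1}\,\one_{D(\prod_{j\in J}F_j)}\quad.
\]

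Now I would push this identity through the two linear operations at our disposal. Applying the $\csm$ transformation, which is linear on constructible functions and hence respects such scissor relations, and then reading off the polynomial $\gamma$ by replacing $[\Pbb^\ell]$ with $t^\ell$ (again a linear operation), gives the polynomial identity $\gamma_{\Pbb^n\smallsetminus S}(t)=\sum_{\emptyset\neq J}(-1)^{|J|+1}\gamma_{D(\prod_{j\in J}F_j)}(t)$. Substituting $t\mapsto -t-1$, multiplying by $(-1)^n$, and applying Corollary~\ref{gfromgamma} to each single homogeneous polynomial $\prod_{j\in J}F_j$ (for which $\sum_i g_i^{(\prod_{j\in J}F_j)}t^{n-i}=(-1)^n\gamma_{D(\prod_{j\in J}F_j)}(-t-1)$), I obtain
\[
(-1)^n\,\gamma_{\Pbb^n\smallsetminus S}(-t-1)=\sum_{\emptyset\neq J}(-1)^{|J|+1}\sum_{i=0}^n g_i^{(\prod_{j\in J}F_j)}\,t^{n-i}\quad.
\]
Exchanging the order of summation and invoking Definition~\ref{polardegsdef}, the right-hand side collapses to $\sum_{i=0}^n g_i^S\,t^{n-i}=g_S(t)$, which is the desired first equality.

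I do not expect a genuine geometric obstacle here; the work is almost entirely bookkeeping. The steps requiring care are the two complement identities $\bigcap_{j\in J}D(F_j)=D(\prod_{j\in J}F_j)$ and $\bigcup_j D(F_j)=\Pbb^n\smallsetminus S$, and the observation that $\gamma$ is genuinely a linear functional of $\csm(\one_{(-)})$, so that the inclusion-exclusion identity survives both applications. The conceptual payoff is that the a priori worry voiced before the statement evaporates automatically: the left-hand side $(-1)^n\gamma_{\Pbb^n\smallsetminus S}(-t-1)$ depends only on the locally closed set $\Pbb^n\smallsetminus S=\Pbb^n\smallsetminus S_\text{red}$, hence only on $S_\text{red}$; so the polar degrees $g_i^S$ are independent of the chosen generators of $I$ and are unchanged if $I$ is replaced by $\sqrt I$ or by its saturation.
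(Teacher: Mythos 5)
Your proof is correct and is essentially the paper's own argument: both rest on the inclusion-exclusion identity $\one_{\Pbb^n\smallsetminus S}=\sum_{\emptyset\ne J}(-1)^{|J|+1}\one_{D(\prod_{j\in J}F_j)}$, the linearity of $\csm$ and of the passage to $\gamma$, Corollary~\ref{gfromgamma} applied to each hypersurface defined by $\prod_{j\in J}F_j$, and Theorem~\ref{main} for the second equality. The only difference is direction of bookkeeping --- you start from $\one_{\Pbb^n\smallsetminus S}$ and work toward $g_S(t)$, whereas the paper starts from $(-1)^n g_S(-t-1)$ and works toward $\gamma_{\Pbb^n\smallsetminus S}(t)$ --- which is immaterial.
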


This proves that Definition~\ref{polardegsdef} is indeed independent of the
ideal chosen to define~$S$, or of the generators of this ideal (since the 
other expressions are independent of these choices), and that
$g^S_i=g^{S_\text{red}}_i$, since $S$ and $S_\text{red}$ have the same
complement in~$\Pbb^n$.

Of course the other formulas encountered in~\S\ref{formulae} also hold
for arbitrary $S$, since they may be derived from the equalities given in 
Theorem~\ref{arbcod}. Thus, 
\[
\chi(\Pbb^n\smallsetminus S) =(-1)^n g_S(-1)\quad,
\]
and
\[
g^S_j =(-1)^j \chi((\Pbb^n\smallsetminus S)\cap (L_j\smallsetminus L_{j-1}))
\]
as in Huh's formulas for hypersurfaces (Corollary~\ref{Huhf}).
For instance, with $C$ the twisted cubic as in Example~\ref{tcex},
the intersection of $\Pbb^3\smallsetminus C$ with a general $\Pbb^2
\smallsetminus \Pbb^1$ consists of the complement in $\Pbb^2$
of a general line and $3$ distinct points, hence
\[
g_2^C = (-1)^2 (\chi(\Pbb^2)-\chi(\Pbb^1)-3\,\chi(\Pbb^0)) = -2
\]
in agreement with the algebraic computation(s) given in Example~\ref{tcex}.

\begin{proof}[Proof of Theorem~\ref{arbcod}]
Choose any collection of generators $F_1,\dots, F_r$ for any ideal~$I$ defining 
$S$, as in Definition~\ref{polardegsdef}, and define $g_S(t)$ as specified above.
Also, let $X_i$ be the hypersurface of $\Pbb^n$ defined by $F_i$.
Then
\begin{align*}
(-1)^n\, g_S(-t-1)
&=(-1)^n \sum_{i=0}^n \sum_{\emptyset\ne J\subseteq \{1,\dots,r\}} 
(-1)^{|J|+1} g_i^{(\prod_{j\in J} F_j)} (-t-1)^{n-i} \\
&=\sum_{\emptyset\ne J\subseteq \{1,\dots,r\}} (-1)^{|J|+1}
(-1)^n\, g_{\cup_{j\in J} X_j}(-t-1) \\
&=\sum_{\emptyset\ne J\subseteq \{1,\dots,r\}} (-1)^{|J|+1}
\gamma_{\Pbb^n\smallsetminus (\cup_{j\in J} X_j)} (t)
\end{align*}
by Corollary~\ref{gfromgamma}. Now, $\gamma_{\Pbb^n\smallsetminus 
(\cup_{j\in J}X_j)} (t)$ is the polynomial corresponding to the $\csm$ class of
$\one_{\Pbb^n\smallsetminus (\cup_{j\in J}X_j)}$. Therefore, the end result
is the polynomial corresponding to the $\csm$ class of the constructible
function
\[
\sum_{\emptyset\ne J\subseteq \{1,\dots,r\}} (-1)^{|J|+1}
\one_{\Pbb^n\smallsetminus (\cup_{j\in J} X_j)}=
\sum_{\emptyset\ne J\subseteq \{1,\dots,r\}} (-1)^{|J|+1}
\one_{\cap_{j\in J}(\Pbb^n\smallsetminus X_j)}\quad.
\]
A simple inclusion-exclusion argument shows that this equals
\[
\one_{\cup_{j=1,\dots,r} (\Pbb\smallsetminus X_j)}=\one_{\Pbb^n\smallsetminus S}
\quad.
\]
Therefore,
\[
(-1)^n g_S(-t-1)=\gamma_{\Pbb^n\smallsetminus S}(t)\quad,
\]
or equivalently
\[
g_S(t) = (-1)^n\, \gamma_{\Pbb^n\smallsetminus S}(-t-1)\quad.
\]
The other equality in Theorem~\ref{arbcod} follows from this, by applying 
Theorem~\ref{main}.
\end{proof}

It would be desirable to have a more direct argument showing the independence 
of the degrees $g_i^S$ on the choices used in Definition~\ref{polardegsdef} to 
define them. (This is a reformulation of a problem posed in~\cite{MR2007377}.)
Example~\ref{tcex} shows that the polar degrees of a higher codimension 
subscheme may be negative; in particular, they cannot be directly interpreted 
as degrees of rational maps as in the hypersurface case. It seems conceivable
that they can be expressed as Euler characteristics of complexes determined
by the ideal sheaf of $S$. 


\section{Polynomial Chern-Schwartz-MacPherson classes}\label{EC}

\subsection{}
We now switch our focus to a different question. Our goal is to provide
the gist of a theory of characteristic classes for projective varieties
(over our fixed algebraically closed field of characteristic~$0$), with values in $\Zbb[t]$. 
Such a theory can be constructed using the theory of Chern-Schwartz-MacPherson 
classes (Proposition~\ref{cstCSM}), but we are going to take a naive approach
and not assume the existence of $\csm$ classes. We find it remarkable that
the `polynomial' version of this theory, including an analogue of the key covariance
property of $\csm$ classes, can be established using no tools other than the
naive considerations on Euler characteristics recalled in \S\ref{introEc}.

\subsection{$\Pbb^{\infty}$-varieties}\label{Pinftyvars}
Our objects will be varieties endowed with base-point-free linear systems.
We consider a fixed infinite chain of embeddings
\[
\Pbb^0 \subseteq \Pbb^1 \subseteq \Pbb^2 \subseteq \cdots \quad,
\]
as a direct system; $\iota_{nm}$ is the chosen embedding $\Pbb^m \to \Pbb^n$ for 
$n>m$. We denote by $\Pbb^\infty$ the limit of this system.
A `$\Pbb^\infty$-variety' is represented by a regular morphism $\varphi: X\to \Pbb^m$, 
where we identify $\varphi: X\to \Pbb^m$ with $\iota_{nm}\circ \varphi: X\to \Pbb^n$ 
for $m<n$. In particular, we may always assume that any two $\Pbb^\infty$-varieties
are represented by morphisms with common target: $\varphi: X\to \Pbb^n$, 
$\psi: Y \to \Pbb^n$; a morphism between the corresponding $\Pbb^\infty$-varieties
is then represented by a commutative diagram
\[
\xymatrix@C=10pt{
X \ar[rr]^f \ar[d]_\varphi & & Y \ar[d]^\psi \\
\Pbb^n \ar[rr]_\xi^\sim & & \Pbb^n
}
\]
such that $f$ is a regular morphism and $\xi$ is an isomorphism. 
Thus, a $\Pbb^\infty$-variety is a variety endowed with a morphism to a projective 
space, and isomorphisms of $\Pbb^\infty$-varieties are induced by `automorphisms
of $\Pbb^\infty$' (by which we mean automorphisms induced by automorphisms at 
some finite level).
For example, a line $L\hookrightarrow \Pbb^2$ and a conic $C\hookrightarrow \Pbb^2$
are {\em not\/} isomorphic as $\Pbb^\infty$-varieties, since the abstract isomorphism
$L\cong \Pbb^1\cong C$ is not induced by an automorphism of $\Pbb^2$; in other
words, the corresponding linear systems do not match.

We also consider a `Chow group' $A_*\Pbb^\infty$, as the direct limit $\varinjlim A_*\Pbb^n$; 
this is the free abelian group on classes $[\Pbb^i]$ for $i\ge 0$. We identify 
$A_*\Pbb^\infty$ with $\Zbb[t]$, where $t^i$ is associated with $[\Pbb^i]$. Of course 
the {\em ring\/} structure of $\Zbb[t] = A_*\Pbb^\infty$ does not define an intersection 
product, but it serves as a convenient shorthand for manipulations of classes in 
$A_*\Pbb^\infty$. We will also relate it with products in a Grothendieck ring of 
$\Pbb^\infty$-varieties in~\S\ref{GrR}.

If $\varphi$ is proper, then push-forward to $\Pbb^n$ followed by the inclusion in 
the direct limit defines a push-forward $\varphi_*: A_*X \to \Zbb[t]$ for every object 
$\varphi: X\to \Pbb^n$. Concretely, for $a\in A_*X$, the coefficient of $t^k$ in 
$\varphi_* (a)\in \Zbb[t]$ equals $\int (\varphi^*(H))^k\cdot a$, where $H$ is the 
hyperplane class. This push-forward satisfies the evident compatibility property with 
morphisms: if $(f,\xi)$ defines a morphism $\varphi \to \psi$ as above and all maps are 
proper, then $\varphi_* =  \psi_*\circ f_*$. (Indeed, automorphisms of $\Pbb^\infty$ 
induce the identity on $A_*\Pbb^\infty$.)

\subsection{Chern classes in $A_*\Pbb^\infty$}\label{numChcl}
We associate with $\varphi: X \to \Pbb^n$ the group of constructible functions
$\cC(X)$, and recall that $\cC$ is a covariant functor, see~\S\ref{introCSM}. 
For $\alpha\in \cC(X)$, we seek a `Chern class' 
$c^\varphi_*(\alpha)\in A_*\Pbb^\infty=\Zbb[t]$ with the following properties:
\begin{itemize}
\item[(i)] $c^\varphi_*$ is a group homomorphism $\cC(X) \to \Zbb[t]$;
\item[(ii)] If $\varphi$ is proper and $X$ is nonsingular, then 
$c^\varphi_*(\one_X)=\varphi_*(c(TX)\cap [X])$;
\item[(iii)] If $(f,\xi)$ defines a morphism $\varphi\to \psi$, and $\alpha\in \cC(X)$,
then $c^\varphi_*(\alpha) = c^\psi_*(f_* (\alpha))$.
\end{itemize}

By resolution of singularities, a theory satisfying requirements (i)--(iii) is necessarily
unique. We could use $\csm$ classes to provide such 
a notion (cf.~Proposition~\ref{cstCSM} below); but the work involved in proving 
the existence of $\csm$ classes is itself nontrivial. We want to advertise an 
alternative, simpler construction, suggested by Theorem~\ref{main}.

By linearity, it suffices to define $c_*^\varphi(\one_Z)$, for an object $\varphi:
X\to \Pbb^n$ and a closed subvariety $Z$. Given such data, we let 
$\chi^\varphi_i(Z)$ denote the Euler characteristic (in the sense of \S\ref{introEc}) 
of $\varphi^{-1}(L)\cap Z$ for a general linear subspace $L\subseteq \Pbb^n$ of 
codimension $i$. We let
\[
\chi^\varphi_Z(t):= \sum_{r\ge 0} (-1)^r \chi^\varphi_i(Z)\, t^i\quad,
\]
and note that this is compatible with the definition given in \S\ref{introsu}, 
to which it reduces if $\varphi$ is an embedding.

\begin{defin}\label{numdef}
We define $c_*^\varphi(\one_Z)$ to be $\cI(\chi^\varphi_Z)\in\Zbb[t]
=A_*\Pbb^\infty$, 
where $\cI$ is the involution defined in \S\ref{introsu}. Explicitly,
\[
c_*^\varphi(\one_Z)=\frac{t\cdot \chi^\varphi_Z(-t-1)+\chi^\varphi_Z(0)}{t+1}\quad.
\]
\end{defin}

\begin{example}
The constant term of $c_*^\varphi(\one_Z)$ equals $\chi(Z)$. 
Indeed, $c_*^\varphi(\one_Z)|_{t=0} = \chi_Z^\varphi (0) = \chi_0^\varphi(Z)=\chi(Z)$.
This is as it should be expected, given the parallel between the characterizing 
properties (i)--(iii) for $c_*^\varphi$ and the Deligne-Grothendieck axioms for $\csm$
classes (cf.~\eqref{degcsm}).
\qede\end{example}

We now proceed to verifying properties (i)--(iii). Property (i) is implicit in the construction.
Property (ii):

\begin{lemma}[Normalization]\label{normpro}
If $\varphi$ is proper and $X$ is nonsingular, then
\[
c^\varphi_*(\one_X)
=\varphi_*(c(TX)\cap [X])\quad.
\]
\end{lemma}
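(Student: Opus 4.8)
The plan is to reduce everything to the definition of $\chi$ from Lemma~\ref{Bittner}, mimicking the computation in the proof of Theorem~\ref{main} but carried out directly with the tangent bundle of $X$ in place of a $\csm$ class, so that no appeal to the existence of Chern-Schwartz-MacPherson classes is needed. Write $\cL:=\varphi^*\cO(1)$, a base-point-free line bundle on $X$, and set $d=\dim X$. For a general linear subspace $L\subseteq\Pbb^n$ of codimension $r$, the preimage $Y:=\varphi^{-1}(L)$ is the scheme-theoretic intersection of $r$ general members of the linear system $|\cL|$. Since this system is base-point-free and the characteristic is $0$, Bertini guarantees that $Y$ is nonsingular (and empty once $r>d$, as $\varphi(X)$ then misses a general $L$); and since $\varphi$ is proper and $L$ is complete, $Y$ is complete. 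Thus $\chi^\varphi_r(X)=\chi(Y)=\int c(TY)\cap[Y]$ because $Y$ is nonsingular and complete (Lemma~\ref{Bittner}).

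First I would compute $c(TY)\cap[Y]$ in terms of data on $X$. As $Y$ is a smooth complete intersection of $r$ divisors in $|\cL|$, its normal bundle is $N_YX=\cL^{\oplus r}|_Y$, and the normal bundle sequence gives $c(TY)=c(TX)|_Y/(1+\varphi^*H)^r|_Y$, where $H$ is the hyperplane class. Moreover $\iota_*[Y]=(\varphi^*H)^r\cap[X]$ for the inclusion $\iota\colon Y\hookrightarrow X$. By the projection formula, pushing forward to $A_*X$ yields
\[
\iota_*\bigl(c(TY)\cap[Y]\bigr)=\frac{(\varphi^*H)^r}{(1+\varphi^*H)^r}\cap\bigl(c(TX)\cap[X]\bigr),
\]
so that, taking degrees,
\[
\chi^\varphi_r(X)=\int \frac{(\varphi^*H)^r}{(1+\varphi^*H)^r}\cap\bigl(c(TX)\cap[X]\bigr).
\]
(The case $r>d$ is consistent, as both sides then vanish.) This is exactly the relation between Euler characteristics of sections and a degree-carrying class that was exploited in Proposition~\ref{csmhypsec} and in the proof of Theorem~\ref{main}, but now $c(TX)\cap[X]$ plays the role formerly played by $\csm(\one_X)$.

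Set $\gamma(t):=\varphi_*(c(TX)\cap[X])$, so that the coefficient of $t^\ell$ in $\gamma$ is $c_\ell:=\int(\varphi^*H)^\ell\cap(c(TX)\cap[X])$. Substituting the displayed formula for $\chi^\varphi_r(X)$ into $\chi^\varphi_X(t)=\sum_r(-1)^r\chi^\varphi_r(X)\,t^r$ and expanding $1/(1+\varphi^*H)^r$ as a binomial series produces the identical summation manipulation as in \S\ref{proofmt}, whose outcome is $\chi^\varphi_X(t)=\bigl(\gamma(0)+t\,\gamma(-t-1)\bigr)/(t+1)=\cI(\gamma)(t)$. Since $\cI$ is an involution, applying it once more and invoking Definition~\ref{numdef} gives $c^\varphi_*(\one_X)=\cI(\chi^\varphi_X)=\gamma=\varphi_*(c(TX)\cap[X])$, as claimed.

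The main obstacle is the first step: arranging that $Y=\varphi^{-1}(L)$ really is nonsingular and complete, with the expected fundamental class and normal bundle, for a general $L$. This is where base-point-freeness of $\varphi^*|\cO(1)|$ and the characteristic-$0$ Bertini theorem are essential, and where properness of $\varphi$ is used to keep $Y$ complete so that the normalization $\chi(Y)=\int c(TY)\cap[Y]$ of Lemma~\ref{Bittner} applies. Once this geometric input is secured, the remainder is the purely formal computation already carried out for Theorem~\ref{main}.
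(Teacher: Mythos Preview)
Your proof is correct and follows essentially the same route as the paper's: invoke Bertini to ensure $\varphi^{-1}(L)$ is nonsingular and complete, compute its class and normal bundle to obtain $\chi^\varphi_r(X)=\int \frac{(\varphi^*H)^r}{(1+\varphi^*H)^r}\cap(c(TX)\cap[X])$, and then repeat verbatim the formal manipulation from~\S\ref{proofmt} to conclude $\chi^\varphi_X=\cI(\varphi_*(c(TX)\cap[X]))$. Your write-up is somewhat more explicit than the paper's (in particular about completeness of $Y$ and the appeal to Lemma~\ref{Bittner}), but the argument is the same.
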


\begin{proof}
By Bertini's theorem (Corollary~10.9 in~\cite{MR0463157}), if $L$ is a general
codimension~$i$ subspace of $\Pbb^n$, then $\varphi^{-1}(L)$ is a codimension~$i$
nonsingular subvariety of $X$. The class $[\varphi^{-1}(L)]$ equals $H^i\cdot [X]$,
where $H$ is the pull-back of the class of a hyperplane, and the normal bundle
$N_{\varphi^{-1}(L)}X$ is (the restriction of) $(1+H)^i$. Thus
\[
\chi^\varphi_i(Z) = \int c(T(\varphi^{-1}(L)))\cap [\varphi^{-1}(L)] 
=\int \frac{H^i}{(1+H)^i}\, c(TX)\cap [X]\quad.
\]
Now we argue exactly as in the proof of Theorem~\ref{main}, and obtain
\[
\chi^\varphi_X(t) =\sum_{r\ge 0} \left(\int \frac{(-H)^r}{(1+H)^r}\, c(TX)\cap [X]\right)t^r 
=c_0-t\sum_{\ell\ge 1} c_\ell \,(-t-1)^{\ell-1}
\]
where $c_i=\int H^i\cdot c(TX)\cap [X]$. As $c_0+c_1 t+\cdots+c_n t^n 
= \varphi_*(c(TX)\cap [X])$, this says
\[
\chi_X^\varphi=\cI(\varphi_* c(TX)\cap [X])\quad,
\]
and it follows that $\cI(\chi_X^\varphi)=\varphi_* c(TX)\cap [X]$ as $\cI$ is an involution.
This is precisely the statement.
\end{proof}

Property (iii):

\begin{lemma}[Covariance]\label{covarlem}
Let 
\[
\xymatrix@C=10pt{
X \ar[rr]^f \ar[d]_\varphi & & Y \ar[d]^\psi \\
\Pbb^n \ar[rr]_\xi^\sim & & \Pbb^n
}
\]
be a commutative diagram, and let $\alpha\in \cC(X)$.
Then $c^\varphi_*(\alpha) = c^\psi_*(f_* (\alpha))$.
\end{lemma}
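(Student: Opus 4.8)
The plan is to reduce to $\alpha=\one_Z$ for a closed subvariety $Z\subseteq X$, using that $c^\varphi_*$ and $c^\psi_*$ are group homomorphisms (property (i)), and then to exploit that $\cI$ is an invertible linear involution. By Definition~\ref{numdef}, $c^\varphi_*(\one_Z)=\cI(\chi^\varphi_Z)$, and on the right side $f_*\one_Z=\sum_k m_k\one_{V_k}$ with the $V_k$ and $m_k$ furnished by Lemma~\ref{fiberlemma} applied to the restriction $f|_Z\colon Z\to Y$; extending the Euler polynomial linearly, $c^\psi_*(f_*\one_Z)=\cI\bigl(\sum_k m_k\,\chi^\psi_{V_k}\bigr)=:\cI(\chi^\psi_{f_*\one_Z})$. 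Since $\cI$ is a bijection, it therefore suffices to prove the identity of Euler polynomials
\[
\chi^\varphi_{\one_Z}=\chi^\psi_{f_*\one_Z}\quad,
\]
after which applying $\cI$ gives the lemma at once.

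The heart of the matter is a coefficient-by-coefficient comparison, whose key geometric input is that $\xi$ is an automorphism of $\Pbb^n$, hence linear: it carries a general codimension-$i$ subspace $L$ to a general codimension-$i$ subspace $L':=\xi(L)$, and commutativity of the square gives $\psi\circ f=\xi\circ\varphi$, so that
\[
f^{-1}(\psi^{-1}(L'))=(\xi\circ\varphi)^{-1}(L')=\varphi^{-1}(\xi^{-1}(L'))=\varphi^{-1}(L)\quad.
\]
First I would apply the second assertion of Lemma~\ref{fiberlemma} (for $f|_Z$) with $W=\psi^{-1}(L')$, extending that assertion from irreducible $W$ to the possibly reducible closed set $\psi^{-1}(L')$ by inclusion-exclusion for $\chi$ (both sides of the formula are additive in $W$). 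This yields
\[
\sum_k m_k\,\chi(\psi^{-1}(L')\cap V_k)=\chi\bigl(Z\cap f^{-1}(\psi^{-1}(L'))\bigr)=\chi(Z\cap\varphi^{-1}(L))\quad,
\]
which is exactly the statement that the $i$-th coefficient of $\chi^\psi_{f_*\one_Z}$ equals the $i$-th coefficient of $\chi^\varphi_{\one_Z}$.

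Care is needed only in matching the word \emph{general} on the two sides. Because the displayed identity holds for \emph{every} $L$ (with $L'=\xi(L)$), the two functions $L\mapsto\chi(Z\cap\varphi^{-1}(L))$ and $L'\mapsto\sum_k m_k\chi(\psi^{-1}(L')\cap V_k)$ agree after the identification of Grassmannians induced by $\xi$; hence their generic values coincide, and the defining Euler characteristics $\chi^\varphi_i(\one_Z)$ and $\chi^\psi_i(f_*\one_Z)$ are equal for all $i$. Summing over $i$ gives $\chi^\varphi_{\one_Z}=\chi^\psi_{f_*\one_Z}$, and linearity in $\alpha$ extends this to every $\alpha\in\cC(X)$, completing the argument after applying $\cI$.

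The main obstacle I anticipate is bookkeeping rather than conceptual: confirming that the generic value computing $\chi^\psi_i(f_*\one_Z)$ (taken over general $L'$ for $\psi$) is genuinely realized at $L'=\xi(L)$ for $L$ general for $\varphi$, and that Lemma~\ref{fiberlemma}'s second assertion may legitimately be invoked for the reducible closed set $\psi^{-1}(L')$. Both are dispatched by the linearity of $\xi$ on the relevant Grassmannian and by the additivity of $\chi$, so no ingredient beyond Lemma~\ref{fiberlemma} and the invertibility of $\cI$ is needed.
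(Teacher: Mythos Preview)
Your proposal is correct and follows essentially the same approach as the paper: reduce by linearity to $\alpha=\one_Z$, apply $\cI^{-1}$ to reduce to an identity of Euler polynomials, and verify that identity coefficientwise via the second part of Lemma~\ref{fiberlemma}. The only cosmetic difference is that the paper first observes $c_*^{\xi\circ\varphi}=c_*^\varphi$ and replaces $\xi$ by the identity (so $\varphi=\psi\circ f$ and a single general $L$ serves both sides), whereas you keep $\xi$ in play and match $L$ with $L'=\xi(L)$ via the induced automorphism of the Grassmannian; your extra care about ``general'' and about the possible reducibility of $\psi^{-1}(L')$ is warranted and handled correctly.
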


\begin{proof}
Since $\xi$ is an isomorphism, it is clear that $c_*^{\xi\circ \varphi}(\alpha)
=c_*^\varphi(\alpha)$; thus we may assume that $\xi$ is the identity, and 
$\varphi=\psi\circ f$.

By linearity we may assume $\alpha=\one_Z$, with $Z\subseteq X$ a closed 
subvariety. Apply Lemma~\ref{fiberlemma} to $f|_Z$ to deduce the existence
of subvarieties $V_1,\dots,V_r$ of $Y$ and integers $m_1,\dots,m_r$
such that $f_*(\one_Z)=\sum_j m_j\one_{V_j}$. 
Then 
\[
c^\psi_*(f_* (\one_Z))= c^\psi_*(\sum_j m_j \one_{V_j}) 
= \sum_j m_j c^\psi_*(\one_{V_j})=\sum_j m_j \cI(\chi^\psi_{V_j})
=\cI(\sum_j m_j \chi^\psi_{V_j})\quad.
\]
On the other hand, $c_*^\varphi(\one_Z)=\cI(\chi^\varphi_Z)$.
This shows that the equality $c^\varphi_*(\alpha) = c^\psi_*(f_* (\alpha))$ is
equivalent to the statement
\[
\chi^\varphi_Z(t) = \sum_j m_j \chi^\psi_{V_j}(t)
\]
and hence to
\[
\forall i\quad,\quad \chi^\varphi_i(Z) = \sum_j m_j \chi^\psi_i(V_j)\quad.
\]
For each $i$, the left-hand side is
\[
\chi(\varphi^{-1}(L)\cap Z)=\chi( f|_Z^{-1}(\psi^{-1}(L)))
\]
where $L$ is a general subspace of $\Pbb^n$ of codimension~$i$. By the second
part of Lemma~\ref{fiberlemma}, this equals
\[
\sum_{j=1}^r m_j \chi(\psi^{-1}(L)\cap V_j)=\sum_{j=1}^r m_j \chi^\psi_i(V_j)\quad,
\]
concluding the proof.
\end{proof}

\subsection{Polynomial Chern classes and Grothendieck ring(s)}\label{GrR}
The free abelian group of isomorphism classes of $\Pbb^\infty$-varieties modulo 
the usual scissor relations defines a `relative Grothendieck group of varieties
over $\Pbb^\infty$', which we will denote $K(\Var_{\Pbb^\infty})$.
If $\varphi: X \to \Pbb^m$ is an (understood) embedding, we write $[X]$ 
for the corresponding element $[\varphi]\in K(\Var_{\Pbb^\infty})$. The part 
of the Grothendieck group determined by embeddings is essentially the same
as the Grothendieck group of `immersed conical varieties' studied in 
\cite{MR2782886}. Also note that $K(\Var_{\Pbb^\infty})$ admits generators
factoring through affine space: $\varphi: X\to \Pbb^m$ obtained by composing
a morphism $\varphi^\circ: X\to \Abb^m$ with a standard embedding into $\Pbb^m$.
It also admits a description in terms of Bittner's relations. 

Isomorphisms of $\Pbb^\infty$ varieties allow for automorphisms of the base 
$\Pbb^\infty$; the usual context of relative Grothendieck groups
(as in e.g., \cite{MR2059227}, \S5) does not. This appears to be advantageous
here since then this Grothendieck group carries more interesting products. For example, 
we can define a product by specifying the operation on generators, as follows:
if $[\varphi], [\psi]\in K(\Var_{\Pbb^\infty})$ are represented by
$\varphi: X\to \Pbb^{m-1}$ and $\psi: Y \to \Pbb^{n-1}$, define $[\varphi]\star [\psi]$ 
to be the class represented by the morphism
\[
\xymatrix{
X\times Y \ar[r]^-{\varphi\times \psi} & 
\Pbb^{m-1}\times \Pbb^{n-1} \ar@{^(->}[r]^-s & 
\Pbb^{mn-1}
}
\]
where $s$ is the Segre embedding. This product is distributive and associative up to 
automorphisms of $\Pbb^\infty$, so it defines a ring structure on $K(\Var_{\Pbb^\infty})$.
A different ring structure will be defined in \S\ref{prod}. 
Determining the precise behavior of $c_*$ with respect to these (and possibly 
other) products is an interesting problem, as knowledge of this behavior is helpful
in concrete computations.

The Chern classes defined in~\S\ref{numChcl} factor through $K(\Var_{\Pbb^\infty})$, 
in the sense that $c_*^\varphi(\alpha)$ only depends on the isomorphism class of 
$\varphi$ and this assignment satisfies the relations defining $K(\Var_{\Pbb^\infty})$. 
That is, if we have an object $\varphi: X\to \Pbb^n$, a closed
subvariety $i: Z\hookrightarrow X$, and let $j: U=X\smallsetminus Z 
\hookrightarrow X$ be the complement, then
\[
c_*^\varphi(\one_X) = c_*^{\varphi\circ i}(\one_Z) + c_*^{\varphi\circ j}(\one_U)\quad.
\]
Indeed, this identity may be verified after applying the involution $\cI$, which
gives
\[
\chi_X^\varphi(t) = \chi_Z^{\varphi\circ i}(t) + \chi_U^{\varphi\circ j}(t)\quad;
\]
and this latter identity is immediate from the additivity of Euler characteristic on 
disjoint unions.

\begin{prop}\label{add}
The assignment $(\varphi: X\to \Pbb^n) \mapsto c_*^\varphi(\one_X)$ defines a 
group homomorphism $\gamma: K(\Var_{\Pbb^\infty}) \to \Zbb[t]$.
\end{prop}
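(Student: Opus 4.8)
The plan is to show that the assignment $(\varphi\colon X\to\Pbb^n)\mapsto c_*^\varphi(\one_X)$ respects the two types of relations defining $K(\Var_{\Pbb^\infty})$: the scissor relations and the isomorphism relations. Since $\Zbb[t]$ is a group under addition and the assignment is defined on generators (isomorphism classes of $\Pbb^\infty$-varieties), it suffices to verify that these defining relations are sent to valid identities in $\Zbb[t]$; the universal property of the free abelian group modulo relations then yields the homomorphism $\gamma$.

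First I would dispatch the isomorphism relations. If $[\varphi]=[\psi]$ in $K(\Var_{\Pbb^\infty})$, then by definition there is a morphism of $\Pbb^\infty$-varieties realized by a commutative square as in Lemma~\ref{covarlem}, with $f$ an isomorphism and $\xi$ an automorphism of $\Pbb^\infty$. Applying Lemma~\ref{covarlem} (Covariance) with $\alpha=\one_X$ gives $c_*^\varphi(\one_X)=c_*^\psi(f_*\one_X)$, and since $f$ is an isomorphism we have $f_*\one_X=\one_Y$, so $c_*^\varphi(\one_X)=c_*^\psi(\one_Y)$. Thus the assignment is well-defined on isomorphism classes.

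Next I would handle the scissor relations, which is the main content. Given an object $\varphi\colon X\to\Pbb^n$, a closed subvariety $i\colon Z\hookrightarrow X$, and the open complement $j\colon U=X\smallsetminus Z\hookrightarrow X$, I must show
\[
c_*^\varphi(\one_X)=c_*^{\varphi\circ i}(\one_Z)+c_*^{\varphi\circ j}(\one_U)\quad.
\]
This is precisely the identity displayed in the text immediately preceding the proposition, and its proof is already sketched there: apply the involution $\cI$ to both sides. Since $\cI$ is additive and an involution, the desired identity is equivalent to
\[
\chi_X^\varphi(t)=\chi_Z^{\varphi\circ i}(t)+\chi_U^{\varphi\circ j}(t)\quad.
\]
Comparing coefficients of $(-t)^i$, this reduces to $\chi_i^\varphi(X)=\chi_i^{\varphi\circ i}(Z)+\chi_i^{\varphi\circ j}(U)$ for each $i$, i.e., to $\chi(\varphi^{-1}(L)\cap X)=\chi(\varphi^{-1}(L)\cap Z)+\chi(\varphi^{-1}(L)\cap U)$ for a general codimension-$i$ linear space $L$. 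Because $Z$ and $U$ partition $X$, the sets $\varphi^{-1}(L)\cap Z$ and $\varphi^{-1}(L)\cap U$ partition $\varphi^{-1}(L)\cap X$ into a closed piece and its open complement, so this follows from the additivity of $\chi$ on such decompositions (inclusion-exclusion, as recorded in \S\ref{introEc}).

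I do not anticipate a genuine obstacle: both families of relations reduce to facts already available. The only point requiring mild care is the linearity needed to pass from generators $\one_X$ to arbitrary constructible functions when invoking covariance, but since the scissor relation only concerns the specific classes $\one_X$, $\one_Z$, $\one_U$, and these satisfy $\one_X=\one_Z+\one_U$ on the nose, the argument is a clean cut-and-paste. The homomorphism $\gamma$ is then the unique additive map determined on generators by $[\varphi]\mapsto c_*^\varphi(\one_X)$.
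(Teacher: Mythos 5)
Your proposal is correct and takes essentially the same route as the paper: the scissor relation is checked by applying the involution $\cI$ and reducing to additivity of the Euler characteristic on the decomposition of $\varphi^{-1}(L)$ into a closed subvariety and its open complement, which is exactly the argument given in the paragraph preceding Proposition~\ref{add}. Your explicit verification of isomorphism invariance via Lemma~\ref{covarlem} spells out a point the paper leaves implicit, but it is the intended argument rather than a different approach.
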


The same assignment has an interesting behavior with respect to products.
Consider the $\Zbb$-module automorphism $\sigma: \Qbb[t] \to \Qbb[t]$ defined 
on generators by $t^i \mapsto \frac{t^i}{i!}$. (This homomorphism could be 
defined for power series; in the terminology of combinatorics, it turns `ordinary' 
generating functions into `exponential' ones.) 

\begin{prop}\label{mult}
$\sigma\circ \gamma: (K(\Var_{\Pbb^\infty}),\star) \to \Qbb[t]$ is a {\em ring\/}
homomorphism.
\end{prop}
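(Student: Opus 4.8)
The plan is to reduce the multiplicativity of $\sigma \circ \gamma$ to an explicit computation with the polynomials $\chi^\varphi_X(t)$, using the fact that $\gamma$ is a group homomorphism (Proposition~\ref{add}) and that the $\star$-product is defined on generators by the Segre embedding. First I would recall that since $\gamma$ and $\sigma$ are both additive, and $\star$ is bilinear with respect to the scissor relations, it suffices to check the ring-homomorphism property on generators $[\varphi], [\psi]$ represented by $\varphi\colon X\to\Pbb^{m-1}$ and $\psi\colon Y\to\Pbb^{n-1}$. That is, I must show
\[
\sigma\bigl(c^{s\circ(\varphi\times\psi)}_*(\one_{X\times Y})\bigr)
= \sigma\bigl(c^\varphi_*(\one_X)\bigr)\cdot \sigma\bigl(c^\psi_*(\one_Y)\bigr)
\]
where $s$ is the Segre embedding $\Pbb^{m-1}\times\Pbb^{n-1}\hookrightarrow\Pbb^{mn-1}$.

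The key step is to understand the general linear sections of the Segre product in terms of the factors. Since $c^\varphi_*(\one_Z) = \cI(\chi^\varphi_Z)$ by Definition~\ref{numdef} and $\cI$ is an involution, I would work throughout with the Euler polynomials $\chi^\varphi_X$, which are additive and directly computable from Euler characteristics of preimages of general linear subspaces. The crucial geometric input is that a general hyperplane in $\Pbb^{mn-1}$ pulls back, under $s\circ(\varphi\times\psi)$, to the preimage of a general hypersurface of bidegree $(1,1)$ on $\Pbb^{m-1}\times\Pbb^{n-1}$; more generally, a general codimension-$k$ linear subspace pulls back to the common zero locus of $k$ general $(1,1)$-forms. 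So I would compute $\chi^{s\circ(\varphi\times\psi)}_k(X\times Y)$ as the Euler characteristic of $(\varphi\times\psi)^{-1}$ of such an intersection, and relate this to the Euler characteristics of the general linear sections appearing in $\chi^\varphi_X$ and $\chi^\psi_Y$. The most transparent route is to pass to the $\csm$-class picture afforded by Proposition~\ref{csmhypsec} and its iteration: the effect of intersecting with a general hyperplane is to apply the operator $\tfrac{H}{1+H}$, and under the Segre embedding $H$ pulls back to $H_1+H_2$, where $H_1,H_2$ are the hyperplane classes of the two factors. This will let me express the bivariate generating function for the sections of $X\times Y$ in terms of the separate generating functions for $X$ and $Y$.

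The main obstacle — and the reason the exponential normalization $\sigma$ appears — is the combinatorial bookkeeping that arises because cutting the Segre product by $k$ general hyperplanes mixes the two factors: each hyperplane contributes $H_1+H_2$, and a product of $k$ such factors expands by the binomial theorem into a sum $\sum_j \binom{k}{j} H_1^j H_2^{k-j}$, so that $\chi^{s\circ(\varphi\times\psi)}_k$ is a binomial convolution of the sectional Euler characteristics of the two factors rather than an ordinary one. The substitution $t^i\mapsto t^i/i!$ is precisely what converts this binomial convolution into an ordinary product of exponential generating functions: it turns the factorial-weighted Cauchy product into honest multiplication. I expect the cleanest way to carry this out is to establish, via the iterated Proposition~\ref{csmhypsec} computation, that
\[
\chi^{s\circ(\varphi\times\psi)}_k(X\times Y)
= \sum_{i+j=k} \binom{k}{i}\,\chi^\varphi_i(X)\,\chi^\psi_j(Y)\quad,
\]
after which the identity $\sigma(\gamma([\varphi]\star[\psi])) = \sigma(\gamma([\varphi]))\cdot\sigma(\gamma([\psi]))$ becomes the statement that $\sigma\circ\cI$ intertwines this binomial convolution with ordinary multiplication. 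The remaining work is then routine verification that $\sigma$ has the stated effect on exponential generating functions, together with checking compatibility of $\sigma\circ\cI$ with the involution so that the computation transfers from $\chi$ to $c_*=\cI(\chi)$; the sign $(-1)^k$ in the definition of $\chi^\varphi_X(t)$ must be tracked carefully through the convolution, but it is absorbed consistently into both sides.
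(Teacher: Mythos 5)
Your overall strategy---reduce to generators, identify a binomial convolution, and let $\sigma$ turn convolution into multiplication---correctly captures \emph{why} the exponential normalization appears, but the displayed formula on which your whole argument rests is false. The binomial convolution holds for the coefficients of the polynomial Chern class $c_*=\cI(\chi)$, \emph{not} for the sectional Euler characteristics $\chi^\varphi_i$ themselves. Take $X=Y=\Pbb^1$ with identity maps, so that $s\circ(\varphi\times\psi)$ embeds $\Pbb^1\times\Pbb^1$ as a nonsingular quadric $Q\subseteq\Pbb^3$. Then $\chi^\varphi_0(\Pbb^1)=2$, $\chi^\varphi_1(\Pbb^1)=1$, and your formula predicts
\[
\chi_1(Q)=\binom{1}{0}\cdot 2\cdot 1+\binom{1}{1}\cdot 1\cdot 2=4\quad,
\]
whereas a general hyperplane section of $Q$ is a smooth conic, so $\chi_1(Q)=2$ (cf.~Example~\ref{P1xP1}: $\chi_0=4$, $\chi_1=2$, $\chi_2=2$, while $c_*(\one_Q)=4+4t+2t^2$; the convolution of $(2,1)$ with itself is $(4,4,2)$, matching $c_*$ and not $\chi$). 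The source of the error is the passage from cycle classes to Euler characteristics: a general hyperplane of $\Pbb^{mn-1}$ does cut out a general $(1,1)$-divisor, but by Proposition~\ref{csmhypsec} (applied in $\Pbb^{mn-1}$) cutting acts on the $\csm$ class by the operator $H/(1+H)$, and $H$ restricts to $H_1+H_2$ on the Segre-embedded product; the resulting operator $(H_1+H_2)/(1+H_1+H_2)$ has a denominator that does not separate into the two factors, so no binomial convolution of the $\chi_i$'s emerges. Equivalently: the involution $\cI$ does not intertwine binomial convolution with any simple operation, so your final step of ``checking compatibility of $\sigma\circ\cI$'' is not routine---it is exactly where the argument breaks.

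The repair is essentially the paper's proof: state and prove the convolution for the coefficients of $c_*$ rather than of $\chi$. This cannot be done on arbitrary generators directly (for singular or non-proper $X$ the coefficients of $c_*$ have no a priori intersection-theoretic expression); instead one invokes Bittner's presentation of the Grothendieck group (\cite{MR2059227}, Remark~3.2), which reduces the verification to generators $\varphi\colon X\to\Pbb^{m-1}$, $\psi\colon Y\to\Pbb^{n-1}$ with $X$, $Y$ nonsingular and projective. There Lemma~\ref{normpro} identifies the $k$-th coefficient of $c_*^{s\circ(\varphi\times\psi)}(\one_{X\times Y})$ as $\int \bigl((s\circ(\varphi\times\psi))^*H\bigr)^k\cdot c(T(X\times Y))\cap[X\times Y]$; since $(s\circ(\varphi\times\psi))^*H=(\varphi\circ p_1)^*h_1+(\psi\circ p_2)^*h_2$ and $T(X\times Y)\cong p_1^*TX\oplus p_2^*TY$, the binomial theorem yields precisely
\[
c_k=\sum_{i+j=k}\binom{k}{i}\,c'_i\,c''_j\quad,
\]
and applying $\sigma$ converts this into the desired product. (Alternatively one could combine Proposition~\ref{cstCSM} with the product formula for $\csm$ classes of a product of varieties, but that formula is a nontrivial theorem which the paper's route via Bittner avoids.)
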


\begin{proof}
To verify that $\sigma\circ \gamma$ preserves products, we use Bittner's relations 
(\cite{MR2059227}, Remark~3.2): it suffices to verify that if $\varphi: X\to \Pbb^m$ 
and $\psi: Y \to \Pbb^n$ are morphisms with $X$ and $Y$ projective and nonsingular,
then $\sigma(c_*^{s\circ (\varphi\times \psi)}(\one_{X\times Y}))=
\sigma(c_*^\varphi(\one_X))\,\sigma(c_*^\psi(\one_Y))$. This will follow from the 
normalization property of $c_*$. Denote by $h_1$, resp., $h_2$, $H$ the 
hyperplane class in $\Pbb^{m-1}$, resp., $\Pbb^{n-1}$, $\Pbb^{mn-1}$. 
Since $X$ and $Y$ are nonsingular,
\[
c_*^\varphi(\one_X)=\sum_i c'_i t^i ,\quad
c_*^\psi(\one_Y)=\sum_j c''_j t^j
\]
where by Lemma~\ref{normpro} $c'_i=\int (\varphi^*h_1)^i\cdot c(TX)\cap [X]$
and $c''_j=\int (\psi^*h_2)^j\cdot c(TY)\cap [Y]$. Likewise,
\[
c_*^{s\circ (\varphi\times \psi)}(\one_{X\times Y})=\sum_k c_k t^k
\]
where $c_k = \int ((s\circ (\varphi\times \psi)^*H)^k \cdot c(T(X\times Y))\cap [X\times Y]$.
We have $(s\circ (\varphi\times \psi))^*H=(\varphi\circ p_1)^*h_1+(\psi\circ p_2)^*h_2$, 
where $p_1$, resp., $p_2$ is the first, resp., second projection from $X\times Y$. Also, 
$T(X\times Y) \cong p_1^*TX \oplus p_2^*TY$. It follows that
\begin{align*}
c_k &= \int ((s\circ (\varphi\times \psi))^*H^k \cdot c(T(X\times Y))\cap [X\times Y] \\
&= \int ((\varphi\circ p_1)^*h_1+(\psi\circ p_2)^*h_2)^k \cdot p_1^* c(TX) \cap p_2^* c(TY)
\cap [X\times Y] \\
&=\sum_{i+j=k} \binom ki \int p_1^* (\varphi^* h_1^i\cdot c(TX))\cap 
p_2^* (\psi^* h_2^j\cdot c(TY)) \cap [X\times Y] \\
&=\sum_{i+j=k} \binom ki \left(\int \varphi^* h_1^i\cdot c(TX)\cap [X]\right)
\left(\int \psi^* h_2^j\cdot c(TY)\cap [Y]\right) \\
&=\sum_{i+j=k} \binom ki c'_i c''_j \quad,
\end{align*}
and hence
\[
\sigma(c_*^{s\circ (\varphi\times \psi)}(\one_{X\times Y}))=
\sum_k \sum_{i+j=k} \binom ki c'_i c''_j \frac{t^k}{k!} 
=\sum_k \sum_{i+j=k} c'_i c''_j \frac{t^i}{i!} \frac {t^j}{j!}
=\sigma(c_*^\varphi(\one_X))\,\sigma(c_*^\psi(\one_Y))
\]
as needed.
\end{proof}

\begin{example}\label{P1xP1}
Consider the class $[\Pbb^1]\in K(\Var_{\Pbb^\infty})$ determined by the identity: 
$\Pbb^1 \to \Pbb^1$. Then $[\Pbb^1]\star [\Pbb^1]=[\Pbb^1\times \Pbb^1]$, 
where $\iota: \Pbb^1\times \Pbb^1\hookrightarrow \Pbb^3$ embeds $\Pbb^1 
\times\Pbb^1$ as a nonsingular quadric $Q$ in $\Pbb^3$. The polynomial
Chern class of $\Pbb^1$ is $2+t$. According to Proposition~\ref{mult},
\[
\sigma (c^\iota_*(\one_{\Pbb^1\times\Pbb^1})) = (\sigma(2+t))^2
=(2+t)^2 =4+4t+t^2\quad.
\]
Therefore, 
\[
c^\iota_*(\one_{\Pbb^1\times\Pbb^1})= \sigma^{-1}(4+4t+t^2)=4+4t+2t^2\quad.
\]
This is as it should: for a nonsingular quadric in $\Pbb^3$ the Euler characteristics
of general linear sections are $\chi_0=4,\chi_1=2,\chi_2=2$, and therefore
\[
c^\iota_*(\one_{\Pbb^1\times\Pbb^1}) = \cI(4-2t+2t^2)=4+4t+2t^2
\]
according to Definition~\ref{numdef}.
\qede\end{example}

There is at least one alternative product that may be considered on 
$K(\Var_{\Pbb^\infty})$, see~\S\ref{prod}. 

\subsection{Polynomial Chern classes and $\csm$ classes}
The characterizing properties listed in~\S\ref{numChcl} imply that the classes are a 
numerical aspect of the Chern-Schwartz-MacPherson classes:

\begin{prop}\label{cstCSM}
If $\varphi$ is proper, then $c^\varphi_*(\alpha)=\varphi_*(\csm(\alpha))$.
\end{prop}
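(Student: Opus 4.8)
The plan is to show that the assignment $\alpha \mapsto \varphi_*(\csm(\alpha))$ satisfies the three characterizing properties (i)--(iii) of $c^\varphi_*$ from~\S\ref{numChcl}, and then to invoke the uniqueness of any theory obeying these properties. Property~(i) is immediate: the Chern--Schwartz--MacPherson transformation $\csm:\cC(X)\to A_*X$ is a group homomorphism, and so is the push-forward $\varphi_*:A_*X\to\Zbb[t]$ recalled in~\S\ref{Pinftyvars}, hence so is their composite.

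For the normalization property~(ii), I would suppose $\varphi:X\to\Pbb^n$ proper and $X$ nonsingular. Then $X$ is complete, being proper over $k$ as the composite $X\to\Pbb^n\to\operatorname{Spec} k$; the normalization axiom for $\csm$ classes therefore applies and gives $\csm(\one_X)=c(TX)\cap[X]$, so that $\varphi_*(\csm(\one_X))=\varphi_*(c(TX)\cap[X])$, which is exactly~(ii). For covariance~(iii), I would consider a morphism $\varphi\to\psi$ presented by a commutative square as in Lemma~\ref{covarlem}; as in the proof of that lemma one may absorb the isomorphism $\xi$ and assume $\varphi=\psi\circ f$. Since $\varphi$ is proper and $\psi$ is separated (any morphism of varieties to $\Pbb^n$ is separated), the cancellation property of proper morphisms shows $f$ is proper. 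The functoriality of $\csm$ then gives $f_*\csm(\alpha)=\csm(f_*\alpha)$, and combining this with the compatibility $\varphi_*=\psi_*\circ f_*$ of push-forwards into $A_*\Pbb^\infty$ (noted in~\S\ref{Pinftyvars}) yields
\[
\varphi_*(\csm(\alpha))=\psi_*(f_*\csm(\alpha))=\psi_*(\csm(f_*\alpha)),
\]
which is~(iii). Since a theory satisfying~(i)--(iii) is unique (by resolution of singularities, as observed in~\S\ref{numChcl}), this identifies $c^\varphi_*(\alpha)$ with $\varphi_*(\csm(\alpha))$.

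The verification is essentially formal once the ingredients are in place, so I do not expect a deep obstacle; the only points demanding care are the two facts just used---that properness of $\varphi$ forces $X$ complete, so that the $\csm$ normalization is available, and that $f$ inherits properness in the covariance step, so that $\csm$-functoriality applies. A more hands-on alternative would bypass uniqueness and instead prove directly that $\cI(\chi^\varphi_Z)=\varphi_*\csm(\one_Z)$, by running the hyperplane-section computation of Lemma~\ref{normpro} with $\csm(\one_Z)$ in place of $c(TX)\cap[X]$; this requires the behavior of $\csm$ under the general sections $\varphi^{-1}(L)$ (that is, Proposition~\ref{csmhypsec} pulled back along $\varphi$), and is more laborious than the axiomatic route sketched above.
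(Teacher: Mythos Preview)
Your proof is correct and follows essentially the same route as the paper: verify that $\alpha\mapsto\varphi_*(\csm(\alpha))$ satisfies the characterizing properties (i)--(iii) of \S\ref{numChcl} (the paper only spells out the covariance check, taking (i) and (ii) as evident), and then invoke uniqueness. Your added care about why $X$ is complete and why $f$ inherits properness is welcome but not something the paper makes explicit.
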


Indeed, the normalizations of $c_*$ and $\csm$ are compatible, so it suffices
to verify that the covariance property of $\csm$ classes implies the third property 
of the classes defined in~\S\ref{numChcl}. If $f: X \to Y$ is proper, 
then $\forall \alpha \in \cC(X)$
\[
\psi_*(\csm(f_*\alpha))=\psi_*(f_*(\csm(\alpha)))=\varphi_* \csm(\alpha)
\]
as needed. Note that with notation as in \S\ref{proofmt} and Proposition~\ref{add}, 
we have $\gamma([X])=\gamma_X(t)$.

Of course we could use the formula in Proposition~\ref{cstCSM} to provide a 
construction of the classes $c_*^\varphi$ alternative to the one given in 
Definition~\ref{numdef}. Similarly, we could use the Grothendieck group 
to define $c_*^\varphi$: if $X$ is nonsingular and $\varphi: X\to \Pbb^n$ is 
proper, we could {\em define\/} $c_*^\varphi(\one_X)$ to be 
$\varphi_*(c(TX)\cap [X])$; and then use Bittner's relations to show this prescription 
descends to the Grothendieck group (cf.~Lemma~\ref{Bittner}), giving a notion for
possibly singular or noncomplete sources. The normalization and covariance 
properties would be immediate; in particular, the resulting class must agree with 
the one given in Definition~\ref{numChcl}.

\begin{remark}
By Proposition~\ref{cstCSM}, the algorithm presented in \cite{MR1956868} computes 
the polynomial Chern class
of a subscheme of projective space, given generators for an ideal defining it 
set-theoretically. By means of the definition given here (Definition~\ref{numdef}),
any algorithm computing Euler characteristics may be adapted to compute the
polynomial Chern class. One such algorithm is presented by Marco-Buzun\'ariz
in~\cite{marco}, together with a polynomial generalization of the Euler characteristic. 
This generalization differs from the polynomial Chern class introduced above by a 
simple change of coordinates, as follows from Proposition~\ref{cstCSM} and the 
result proved by Rennemo in the appendix to~\cite{marco}.
\qede\end{remark}

The elementary approach described in \S\ref{numChcl} streamlines the proof of 
some properties of these polynomial $\csm$ classes. For example, 
let $\iota: X\hookrightarrow \Pbb^n$ be a projective variety, and consider the
cone $\iota': X' \hookrightarrow \Pbb^{n+1}$ with vertex a point. It is clear that
$\chi(X')=1+\chi(X)$ and (with notation as in \S\ref{numChcl}) 
$\chi_j^{\iota'}(X')=\chi_{j-1}^\iota(X)$ for $j>0$. That is,
\[
\chi^{\iota'}_{X'}(t)=1+\chi(X)-t \chi^\iota_X(t)\quad,
\]
and hence
\[
c_*^{\iota'}(\one_{X'})=\cI(\chi(X)+1-t \chi^\iota_X(t))=\chi(X)+1+t \chi^\iota_X(-t-1)
=(t+1) c_*^\iota(t)+1\quad.
\]
This formula matches the one obtained in Proposition~5.2 in~\cite{MR2504753}
by a somewhat more involved argument. 

On the other hand, the relation obtained in Proposition~\ref{cstCSM} allows us
to interpret results for $\csm$ classes in terms of their polynomial aspect. 
We illustrate one application in the following, final section.

\subsection{Another ring homomorphism}\label{prod}
We can give a different, and in a sense more natural, multiplication operation on
$K(\Var_{\Pbb^\infty})$. We define it on {\em affine\/} generators, i.e., 
morphisms $X\to \Pbb^m$, $Y\to \Pbb^n$ which factor through affine space:
\[
\xymatrix{
X \ar@/_1pc/[rr]_\varphi \ar[r]^{\varphi^0} & \Abb^m \ar@{^(->}[r] & \Pbb^m & 
Y \ar@/_1pc/[rr]_\psi \ar[r]^{\psi^0} & \Abb^n \ar@{^(->}[r] & \Pbb^n
}
\]
We set $[\varphi]\cdot [\psi]$ to be the class represented by the morphism
\[
X\times Y \to \Abb^{m+n} \hookrightarrow \Pbb^{m+n}
\]
defined by
\begin{multline*}
(x,y) \mapsto (\varphi_1^\circ(x),\dots, \varphi_m^\circ(x),
\psi_1^\circ(x),\dots, \psi_n^\circ(x)) \\
\mapsto
(1\colon \varphi_1^\circ(x)\colon \dots\colon \varphi_m^\circ(x)\colon
\psi_1^\circ(x)\colon\dots\colon \psi_n^\circ(x))\quad.
\end{multline*}

This definition has a counterintuitive aspect to it: Although it does determine 
(by distributivity) a class $[\varphi]\cdot [\psi]$ for every $[\varphi], [\psi]\in
K(\Var_{\Pbb^\infty})$, this class may not have a compelling geometric
realization. For example, we have
\begin{align*}
[\Pbb^1]\cdot [\Pbb^1] &= ([\Abb^1]+[\Abb^0])\cdot ([\Abb^1]+[\Abb^0])
=[\Abb^1]\cdot [\Abb^1] + 2 [\Abb^1]\cdot [\Abb^0] + [\Abb^0]\cdot [\Abb^0] \\
&=[\Abb^2] + 2[\Abb^1] + [\Abb^0]\quad,
\end{align*}
but this does {\em not\/} equal $[\Pbb^1 \times \Pbb^1]=[\Pbb^1]\star [\Pbb^1]$
in $K(\Var_{\Pbb^\infty})$: although $\Pbb^1\times \Pbb^1$ admits the same
affine decomposition, the morphisms induced on the affine pieces by the Segre
map are not the inclusions as dense open sets of the corresponding projective
spaces. 

\begin{remark}\label{join}
A partial antidote to this unpleasant feature is through
the {\em join\/} construction. If $X\subseteq \Pbb^m$ and $Y\subseteq \Pbb^n$,
then we can place $\Pbb^m$ and $\Pbb^n$ as disjoint subspaces of $\Pbb^{m+n+1}$
(by acting with an `automorphism of $\Pbb^\infty$' on $Y\subseteq \Pbb^n
\subseteq \Pbb^{m+n+1}$),
and let $J(X,Y)$ be the union of the lines joining points of $X$ to points of $Y$.
The complement $J(X,Y)^\circ$ of $X$ and $Y$ in $J(X,Y)$ maps surjectively to 
$X\times Y$, with $k^*$ fibers. The reader can verify that
\[
[J(X,Y)^\circ] = \Tbb\cdot [X]\cdot [Y]
\]
in $K(\Var_{\Pbb^\infty})$, where $\Tbb$ denotes the class of the natural embedding
$k^*\subseteq \Pbb^1$. Thus, while $[X]\cdot [Y]$ may not have a direct `geometric'
realization, $[X]\cdot [Y]\cdot \Tbb$ does.
\qede\end{remark}

The new product $\cdot$ clearly defines an alternative structure of ring on the group 
$K(\Var_{\Pbb^\infty})$.
Now recall that we have defined a group homomorphism $\gamma: K(\Var_{\Pbb^\infty})
\to \Zbb[t]$, cf.~Proposition~\ref{add}.

\begin{prop}\label{mult2}
$\gamma$ is a {\em ring\/} homomorphism $(K(\Var_{\Pbb^\infty}),\cdot) \to \Zbb[t]$.
\end{prop}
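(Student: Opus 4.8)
The plan is to reduce to affine generators and then to a single comparison between ``general'' and ``product'' affine sections, which after applying the involution $\cI$ collapses to ordinary multiplication. Since $\gamma$ is a group homomorphism (Proposition~\ref{add}) and both sides of the asserted identity are biadditive, and since $\cdot$ is defined on affine generators and extended by distributivity, it suffices to prove $\gamma([\varphi]\cdot[\psi]) = \gamma([\varphi])\,\gamma([\psi])$ when $\varphi\colon X\to\Abb^m$ and $\psi\colon Y\to\Abb^n$ are affine generators (composed with the standard embeddings into $\Pbb^m,\Pbb^n$). Writing $\Phi\colon X\times Y\to\Abb^{m+n}\hookrightarrow\Pbb^{m+n}$ for the representative of $[\varphi]\cdot[\psi]$ and abbreviating $\chi^\varphi:=\chi^\varphi_X$, $\chi^\psi:=\chi^\psi_Y$, $\chi^\Phi:=\chi^\Phi_{X\times Y}$, the goal (since $\gamma([\varphi])=\cI(\chi^\varphi_X)$ by Definition~\ref{numdef} and $\cI$ is an involution) becomes an identity relating $\chi^\Phi$ to $\chi^\varphi$ and $\chi^\psi$.

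There are two natural ways to slice $\Phi$. Slicing by a \emph{product} affine subspace $A'_j\times A''_k\subseteq\Abb^m\times\Abb^n$ is harmless: its $\Phi$-preimage is $(\varphi^\circ)^{-1}(A'_j)\times(\psi^\circ)^{-1}(A''_k)$, so multiplicativity of $\chi$ on products gives Euler characteristic $\chi^\varphi_j\,\chi^\psi_k$. The coefficients $\chi^\Phi_i$, however, are computed from \emph{general} (coordinate-mixing) codimension-$i$ subspaces of $\Abb^{m+n}$. The crux is the comparison between the two, which I claim takes the closed form
\begin{align*}
\chi^\Phi(t) &= \chi^\varphi(0)\,\chi^\psi(t) + \chi^\psi(0)\,\chi^\varphi(t) - \chi^\varphi(0)\,\chi^\psi(0) \\
&\quad + \frac{1+t}{t}\bigl(\chi^\varphi(t)-\chi^\varphi(0)\bigr)\bigl(\chi^\psi(t)-\chi^\psi(0)\bigr)
\end{align*}
(the right-hand side is a genuine polynomial, since each factor in the last term is divisible by $t$).

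To prove this identity I would project the $\Phi$-preimage of a general codimension-$i$ subspace $A_i=\{(u,v):Mu+Nv=a\}$ onto the second factor $Y$. For $M$ of maximal rank the fibre over $y$ is the $\varphi$-preimage of $\{u:Mu=a-N\psi^\circ(y)\}$, i.e.\ of a \emph{parallel translate} of one fixed general codimension-$i$ affine subspace of $\Abb^m$ (the direction $\ker M$ is independent of $y$), the translate being governed by a general affine map $\nu\colon Y\to\Abb^i$ built from $\psi^\circ$ and $N$. By Lemma~\ref{fiberlemma} the Euler characteristic $\chi^\Phi_i$ is the Euler-characteristic integral over $Y$ of this fibrewise Euler characteristic, which equals $\chi^\varphi_i$ for a generic translate and jumps on the locus of special translates. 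The main obstacle is exactly the bookkeeping of these jumps: one must check that they record precisely the deeper slicing data $\chi^\varphi_{i+1},\chi^\varphi_{i+2},\dots$ and that pairing them against $\nu_*\one_Y$ (whose general sections reproduce the $\chi^\psi_k$, again by Lemma~\ref{fiberlemma}) assembles into the displayed generating-function identity; the drop from $t^{j+k}$ to $t^{j+k-1}$ together with the factor $(1+t)$ is the numerical signature of this interaction.

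Finally, granting the identity, the conclusion is a routine manipulation of $\cI$. Using $\cI(p)(t)=\frac{t\,p(-t-1)+p(0)}{t+1}$, together with $\cI(p)(0)=p(0)$ and $t\,p(-t-1)=(t+1)\,\cI(p)(t)-p(0)$, one substitutes $t\mapsto -t-1$ in the identity and simplifies; writing $\hat P=\cI(\chi^\varphi)=\gamma([\varphi])$ and $\hat Q=\cI(\chi^\psi)=\gamma([\psi])$, the constant terms $\chi^\varphi(0)=\hat P(0)$, $\chi^\psi(0)=\hat Q(0)$ and the factor $\frac{1+t}{t}$ conspire so that $\cI(\chi^\Phi)=\hat P\,\hat Q$, which is the desired equality $\gamma([\varphi]\cdot[\psi])=\gamma([\varphi])\,\gamma([\psi])$. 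Thus the entire content of the proposition is concentrated in the third paragraph's geometric comparison; everything else is formal.
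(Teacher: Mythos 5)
Your overall architecture is sound: the reduction to affine generators is legitimate, the final $\cI$-manipulation is correct, and in fact your displayed generating-function identity is genuinely \emph{equivalent} to the multiplicativity you want to prove (substituting $t\mapsto -t-1$ and using $t\,p(-t-1)=(t+1)\,\cI(p)(t)-p(0)$ turns it exactly into $\cI(\chi^\Phi)=\cI(\chi^\varphi)\,\cI(\chi^\psi)$, given $\chi^\Phi(0)=\chi^\varphi(0)\chi^\psi(0)$). But this equivalence is precisely the problem: you have reformulated the proposition, not proved it. The entire content now sits in the third paragraph, and there you only describe a strategy — project $\Phi^{-1}(A_i)$ to $Y$, note that the fibres are preimages of translates of a fixed codimension-$i$ subspace of $\Abb^m$, and then ``bookkeep the jumps'' via Lemma~\ref{fiberlemma}. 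You say yourself that ``the main obstacle is exactly the bookkeeping of these jumps''; that obstacle is never overcome. Nothing in the proposal identifies the jump loci, shows that the jump multiplicities are governed by $\chi^\varphi_{i+1},\chi^\varphi_{i+2},\dots$ paired against the $\chi^\psi_k$, or explains why a general choice of $(M,N,a)$ puts the relevant strata in general position so that Lemma~\ref{fiberlemma} applies with the claimed outcome. There is also an unaddressed degenerate range: when $i>m$ the matrix $M$ cannot have rank $i$, so the fibres over $Y$ are no longer translates of a codimension-$i$ subspace of $\Abb^m$, and your description of the fibration breaks down exactly in the degrees needed to pin down the top coefficients of $\chi^\Phi$.

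For comparison, the paper avoids any direct slicing computation on the product. It first uses covariance (Lemma~\ref{covarlem}) to reduce to the case where $\varphi,\psi$ are closed embeddings, then invokes the join construction of Remark~\ref{join}, which realizes $\Tbb\cdot[X]\cdot[Y]$ geometrically as the open join $[J(X,Y)^\circ]=[J(X,Y)]-[X]-[Y]$, and finally imports the known formula for $\csm(\one_{J(X,Y)})$ (Theorem~3.13 of~\cite{MR2782886}) through Proposition~\ref{cstCSM} to get $t\,\gamma([J(X,Y)])=(t\,\gamma([X])+1)(t\,\gamma([Y])+1)-1$, from which the homomorphism property falls out. In other words, the hard geometric input is outsourced to an existing $\csm$-class computation for joins, whereas your plan tries to redo an equivalent computation from scratch with Lemma~\ref{fiberlemma} alone. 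That would be a nice, more self-contained argument if completed, but as written it is a restatement of the goal plus a sketch, not a proof.
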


\begin{proof}
We have to verify that if $\varphi: X \to \Pbb^{m-1}$, $\psi: Y \to \Pbb^{n-1}$ are affine
generators, then $c_*^{\varphi\times \psi}(\one_{X\times Y})=
c_*^\varphi(\one_X) c_*^\psi(\one_Y)$. By Lemma~\ref{covarlem}, this is easily
reduced to the case in which $\varphi$, $\psi$ are embeddings. 
Using Remark~\ref{join}, we see that it suffices to verify that 
\begin{equation}\label{homom}
\gamma([J(X,Y)^\circ])=\gamma(\Tbb)\, \gamma([X])\,\gamma([Y])\quad.
\end{equation}
Now, $\gamma(\Tbb)=t$: indeed $[\Pbb^1]=2+t$, and $k^*$
is the complement of two distinct points in $\Pbb^1$. 
Next, the $\csm$ class of a join was computed in Theorem~3.13 in~\cite{MR2782886}:
\begin{equation}\label{joinf}
\csm(\one_{J(X,Y)})=((f(H)+H^m)(g(H)+H^n)-H^{m+n})\cap [\Pbb^{m+n-1}]\quad,
\end{equation}
where $\csm(\one_X)=f(H)\cap [\Pbb^{m-1}]$, $\csm(\one_Y)=g(H)\cap [\Pbb^{n-1}]$,
and $H$ denotes the hyperplane class throughout. 
Using Proposition~\ref{cstCSM}, \eqref{joinf} implies a statement on polynomial Chern
classes, which translates into
\[
t\, \gamma([J(X,Y)]) = (t\,\gamma([X])+1) (t\, \gamma([X])+1) -1\quad,
\]
as the reader may verify.
As $[J(X,Y)^\circ]=[J(X,Y)]-[X]-[Y]$, this is immediately seen to imply \eqref{homom},
concluding the proof.
\end{proof}

\begin{example}\label{P1xP1w}
By Proposition~\ref{mult2},
\begin{equation}\label{gammadot}
\gamma([\Pbb^1]\cdot [\Pbb^1]) = (2+t)^2 = 4+4t+t^2\quad.
\end{equation}
This equals the expression obtained for $\sigma(\gamma([\Pbb^1]\star [\Pbb^1]))$
in Example~\ref{P1xP1}, but reflects a very different geometric situation:
$[\Pbb^1]\star [\Pbb^1]$ is the class of a nonsingular quadric in $\Pbb^3$;
the class $[\Pbb^1]\cdot [\Pbb^1]$ does not appear to be the class of an irreducible
variety, but we can
realize $\Tbb\cdot [\Pbb^1]\cdot [\Pbb^1]$ as the class of the `open join'
$J(\Pbb^1,\Pbb^1)^\circ$ in $\Pbb^3$. Since $J(\Pbb^1,\Pbb^1)=\Pbb^3$, this
gives
\[
t\, \gamma([\Pbb^1]\cdot [\Pbb^1]) = \gamma([\Pbb^3]-2[\Pbb^1])
=(4+6t+4t^2+t^3)-2(2+t)=t(4+4t+t^2)\quad,
\]
confirming \eqref{gammadot}.
\qede\end{example}

\begin{remark}
If $\iota_X: X\to \Pbb^{m-1}$ is a closed embedding, let $\hat X\subseteq \Abb^m$
be the corresponding affine cone, and denote by $\iota_{\hat X}: \hat X \to \Pbb^m$ 
the embedding of this cone in the projectivization of $\Abb^m$. The polynomial
$\gamma([\hat X])=c_*^{\iota_{\hat X}}(\one_{\hat X})$ agrees with the polynomial 
denoted $G_{\hat X}$ in~\cite{MR2782886}. Proposition~\ref{mult2} is then a mild
generalization of Theorem~3.6 in~\cite{MR2782886}, which is the key step in the 
definition of `polynomial Feynman rules'.
\qede\end{remark}

\begin{remark}
Using the involution $\cI$, Propositions~\ref{mult} and~\ref{mult2} yield expressions
for the Euler characteristics of general linear sections of products and joins of
embedded varieties $X$, $Y$ in terms of the same information for $X$ and $Y$.
For example, as we have seen above we have
\[
\gamma_{J(X,Y)}(t)=t\, \gamma_X(t)\, \gamma_Y(t) + \gamma_X(t)+\gamma_Y(t)
\quad;
\]
applying $\cI$ gives
\begin{align*}
\chi_{J(X,Y)}(t) &= -t\, \gamma_X(-t-1)\, \gamma_Y(-t-1) + \chi_X(t)+\chi_Y(t) \\
&= \frac 1t \left((t+1) \chi_X(t)-\chi(X)\right)\left((t+1) \chi_Y(t)-\chi(Y)\right)
+\chi_X(t) + \chi_Y(t)
\quad,
\end{align*}
and reading off the coefficient of $t^\ell$ we get
\[
\chi_\ell^{J(X,Y)} = \sum_{j+k=\ell-1} (\chi_j^X-\chi_{j+1}^X)(\chi_k^Y-\chi_{k+1}^Y)
+\chi_\ell^X+\chi_\ell^Y
\]
for all $\ell>0$.
These expressions interpolate between the $\ell=1$ 
case, stating that the Euler characteristic of a general hyperplane section of
$J(X,Y)$ equals
\[
(\chi_0^X-\chi_1^X)(\chi_0^Y-\chi_1^Y)+\chi_1^X+\chi_1^Y
\]
(which is straightforward) and the $\ell=\dim X+\dim Y+1$ case, stating that 
the degree of $J(X,Y)$ is the product of the degrees of $X$ and $Y$
(\cite{MR1182558}, Example 18.7). 

Similarly, for the product $X\times Y$ embedded via the Segre embedding, the 
corresponding expressions interpolate between the well-known formulas $\chi(X\times Y)
=\chi(X)\chi(Y)$ and $\deg(X\times Y) = \binom{\dim X+\dim Y}{\dim X} (\deg X)(\deg Y)$.
The Euler characteristic of a general hyperplane section is
\[
\sum_{j\ge 1}\left(\chi_j^X(\chi_{j-1}^Y-\chi_j^Y)+\chi_j^Y(\chi_{j-1}^X-\chi_j^X)\right)
\]
and formulas for general linear sections of higher codimension are progressively
more complicated. The relative complexity of these formulas hides the simplicity of their
source, that is the homomorphism statement in Theorem~\ref{mult}.
\qede\end{remark}



\end{document}